\newtheorem{theorem}{Theorem}[section]
\newtheorem{lemma}[theorem]{Lemma}
\theoremstyle{definition}
\newtheorem{definition}[theorem]{Definition}
\theoremstyle{remark}
\newtheorem{remark}[theorem]{Remark}
\newtheorem{proposition}[theorem]{Proposition}
\newtheorem{corollary}[theorem]{Corollary}
\numberwithin{equation}{section}
\begin{document}

\title[Maps between schematic semi-graded rings]{Maps between schematic semi-graded rings}



\author{Andr\'es Chac\'on}
\address{Universidad Nacional de Colombia - Sede Bogot\'a}
\curraddr{Campus Universitario}
\email{anchaconca@unal.edu.co}
\thanks{}


\author{Mar\'ia Camila Ram\'irez}
\address{Universidad Nacional de Colombia - Sede Bogot\'a}
\curraddr{Campus Universitario}
\email{macramirezcu@unal.edu.co}
\thanks{}


\author{Armando Reyes}
\address{Universidad Nacional de Colombia - Sede Bogot\'a}
\curraddr{Campus Universitario}
\email{mareyesv@unal.edu.co}

\thanks{The authors were supported by the research fund of Faculty of Science, Code HERMES 53880, Universidad Nacional de Colombia - Sede Bogot\'a, Colombia.}

\subjclass[2020]{14A22; 16S38; 16S80; 16U20; 16W60}

\keywords{Non-commutative projective space, closed immersion, semi-graded ring, schematic ring, Ore set}

\date{}

\dedicatory{Dedicated to Professor Oswaldo Lezama}

\begin{abstract}

Motivated by Smith's work \cite{Smith2003, Smith2016} on maps between non-commu\-tative projective spaces of the form ${\rm Proj}_{nc} A$ in the setting of non-commutative projective geometry developed by Rosenberg and Van den Bergh, and the notion of schematicness introduced by Van Oystaeyen and Willaert \cite{VanOystaeyenWillaert1995} to $\mathbb{N}$-graded rings with the aim of formulating a non-commutative scheme theory \`a la Grothendieck \cite{EGAII1961}, in this paper we consider a first approach to maps in the Smith's sense in the more general setting of non-commutative projective spaces over semi-graded rings defined by Lezama and Latorre \cite{LezamaLatorre2017}. We extend Smith's key result \cite[Theorem 3.2]{Smith2003}, \cite[Theorem 1.2]{Smith2016} from the category of schematic $\mathbb{N}$-graded rings to the category of schematic semi-graded rings. 

\end{abstract}

\maketitle

\section{Introduction}

It is well-known that Serre \cite{Serre1955} proved a theorem that describes the coherent sheaves on a projective scheme in terms of graded modules. Following Artin and Zhang \cite[Section 1]{ArtinZhang1994}, a commutative graded $\Bbbk$-algebra is associated to a projective scheme ${\rm Proj} A$, and the geometry of this scheme can be described in terms of the quotient category $\mathsf{qgr} A = \mathsf{gr} A / \mathsf{tors}$, where $\mathsf{gr} A$ denotes the category of graded modules and $\mathsf{tors}$ denotes its subcategory of torsion modules. For $A$ a finitely generated commutative graded $\Bbbk$-algebra and $X$ its associated projective scheme, if $\mathsf{coh} X$ denotes the category of coherent sheaves on $X$ and $\mathcal{O}_X(n)$ is the $n$th power of the twisting sheaf on $X$ \cite[p. 117]{Hartshorne1977}, then we have a functor $\Gamma_*: \mathsf{coh} X \to \mathsf{qgr}\ A$ given by
\[
\Gamma_*(\mathcal{F}) = \bigoplus_{d = -\infty}^{\infty} {\rm H}^0(X, \mathcal{F}\otimes \mathcal{O}_X(d)),
\]

and {\em Serre's theorem} asserts that if $A$ is generated over $\Bbbk$ by elements of degree one, then $\Gamma_*$ defines an equivalence of categories $\mathsf{coh} X \to \mathsf{qgr}\ A$ \cite[Section 59, Proposition. 7.8, p. 252]{Serre1955}, \cite[3.3.5]{EGAII1961} and \cite[Proposition. II. 5.15]{Hartshorne1977}.

Artin and Zhang \cite{ArtinZhang1994} extended Serre's theorem. For $A$ an $\mathbb{N}$-graded algebra over a commutative Noetherian ring, they defined the associated projective scheme to be the pair ${\rm Proj}\ A = ({\rm qgr}\ A, \mathcal{A})$, where ${\rm qgr}\ A$ is the quotient category above and $\mathcal{A}$ is the object determined by the right module $A_A$. They showed the autoequivalence $s$ of ${\rm qgr}\ A$ defined by the shift of degrees in ${\rm gr}\ A$. The object $\mathcal{A}$ plays the role of the {\em structure sheaf} of ${\rm Proj}\ A$ and $s$ the role of the {\em polarization} defined by the projective embedding (this definition is the same as is given by Verevkin \cite{Verevkin1992a, Verevkin1992}). Since Serre's theorem does not hold for all commutative graded algebras, i.e., the functor defined by $\Gamma_*$ need not be an equivalence, Artin and Zhang's definition of ${\rm Proj}\ A$ is compatible with the classical definition for commutative graded rings only under some additional hypotheses, such as that $A$ is generated in degree one. In the literature, the non-commutative version of Serre's theorem is known as {\em Serre-Artin-Zhang-Verevkin theorem} \cite{ArtinZhang1994, Verevkin1992a, Verevkin1992}. 

Van Oystaeyen and Willaert \cite{VanOystaeyenWillaert1995} studied above ${\rm Proj}\ A$ by developing a kind of scheme theory similar to the commutative theory (note that Manin \cite{Manin1991} commented the failure of attempts to obtain a non-commutative scheme theory \`a la Grothendieck \cite{EGAII1961} for quantized algebras). They noticed that this theory is possible only if the connected and $\mathbb{N}$-graded algebra considered contains \textquotedblleft enough\textquotedblright\ Ore sets. Algebras satisfying this condition are called {\em schematic}, and some examples are homogenizations of almost commutative algebras, Rees rings of universal enveloping algebras of Lie algebras, and three-dimensional Sklyanin algebras. They constructed a {\em generalized Grothendieck topology} for the free monoid on all Ore sets of a schematic algebra $R$, and defined a {\em non-commutative site} (see \cite{VanOystaeyenWillaert1996a} for more details) as a category with coverings on which sheaves can be defined, and formulated the Serre's theorem. As a consequence of their treatment, an equivalence between the category of all coherent sheaves and the category ${\rm Proj}\ A$ was obtained in the sense of non-commutative algebraic geometry introduced by Artin \cite{Artin1992}. In a series of papers \cite{VanOystaeyenWillaert1996a, vanOystaeyenWillaert1996, VanOystaeyenWillaert1997, Willaert1998}, and the book \cite{VanOystaeyen2000}, Van Oystaeyen and Willaert investigated several properties of schematic $\mathbb{N}$-graded rings.

On the other hand, following Rosenberg \cite{Rosenberg1995, Rosenberg1998} and Van den Bergh's \cite{VandenBergh2001} ideas of the non-commutative projective geometry, Smith \cite{Smith2003, Smith2016} takes a {\em Grothendieck category} as the basic non-commutative geometric object. More exactly, we think of a Grothendieck category $\mathsf{Mod} X$ (the idea for the notation $X = \mathsf{Mod} X$ is Van den Bergh's) as \textquotedblleft the quasi-coherent sheaves on an imaginary non-commutative space $X$\textquotedblright. The standard commutative example is the category $\mathsf{Qcoh} X$ of quasi-coherent sheaves on a quasi-separated, quasi-compact scheme $X$. The two non-commutative models are $\mathsf{Mod} R$, the category of right modules over a ring, and $\mathsf{Proj} A$, the non-commutative projective spaces having a (not necessarily commutative) graded ring $A$ as homogeneous coordinate ring, which were defined by Verevkin \cite{Verevkin1992, Verevkin1992a} and Artin and Zhang \cite{ArtinZhang1994} (for more details, see \cite[Definition 2.2]{Smith2002}). A $\mathsf{map}$ $g:Y \to X$ between two spaces is an adjoint pair of functors $(g^{*}, g_{*})$ with $g_{*}: \mathsf{Mod} Y \to \mathsf{Mod} X$ and $g^{*}$ left adjoint to $g_{*}$. If $g_{*}$ is faithful and has a right adjoint, then $g$ is called $\mathsf{affine}$. An immediate example is a ring homomorphism $\varphi: R\to S$ that induces an affine map $g: Y \to X$ between the affine spaces defined by $\mathsf{Mod} Y := \mathsf{Mod} S$ and $\mathsf{Mod} X := \mathsf{Mod} R$. In his papers, Smith considered the question on maps between non-commutative projective spaces. He called a map $g:Y \to X$ a $\mathsf{closed\ immersion}$ if it is affine and the essential image of $\mathsf{Mod} Y$ in $\mathsf{Mod} X$ under $g_{*}$ is closed under submodules and quotients. For $J$ a graded ideal in a not necessarily commutative $\mathbb{N}$-graded $\Bbbk$-algebra $A = A_0 \oplus A_1 \oplus \dotsb$ in which ${\rm dim}_{\Bbbk} A_i < \infty$ for all $i$, Smith \cite[Theorem 3.2]{Smith2003}, \cite[Theorem 1.2]{Smith2016} showed that a surjective homomorphism $A \to A/J$ of graded rings induces a closed immersion $i: {\rm Proj}_{nc} A/J \to {\rm Proj}_{nc} A$ between the non-commutative projective spaces with homogeneous coordinate rings $A$ and $A/J$.

With the aim of generalizing $\mathbb{N}$-graded rings, finitely $\mathbb{N}$-graded algebras and other families of algebras appearing in ring theory and non-commutative geometry that are not $\mathbb{N}$-graded algebras (of course in a non-trivial sense), Lezama and Latorre \cite{LezamaLatorre2017} introduced the {\em semi-graded rings} generated in degree one. In their paper, they investigated geometric and algebraic properties of these objects such as generalized Hilbert series, Hilbert polynomial and Gelfand-Kirillov dimension. In particular, they extended the notion of non-commutative projective scheme in the sense of Artin and Zhang above for $\mathbb{N}$-graded rings to the setting of semi-graded rings, and generalized the Serre-Artin-Zhang-Verevkin theorem \cite[Section 6]{LezamaLatorre2017}. Since then, algebraic and homological properties of semi-graded rings have been studied by different researchers \cite{AbdiTalebi2024, Artamonov2015, Fajardoetal2020, HigueraReyes2023, Lezama2021,  Lezamaetal2019, NinoRamirezReyes2020, NinoReyes2023}.

Recently, the first and third authors \cite{ChaconPhD2022, ChaconReyes2022} investigated the {\em schematicness} of semi-graded rings and extended Van Oystaeyen and Willaert's ideas \cite{VanOystaeyenWillaert1995} on a scheme theory for non-commutative $\mathbb{N}$-graded rings to the class of semi-graded rings (as a matter of fact, the condition of connectedness of the algebra appearing in \cite{VanOystaeyenWillaert1995} is not assumed), and presented another approach to the Serre-Artin-Zhang-Verevkin theorem for semi-graded rings \cite[Section 5]{ChaconReyes2022}.

With all above facts in mind, in this paper we consider maps in the Smith's sense but now in the setting of non-commutative projective spaces over semi-graded rings. We extend Smith's key result \cite[Theorem 3.2]{Smith2003}, \cite[Theorem 1.2]{Smith2016} from the category of schematic $\mathbb{N}$-graded rings to the category of schematic semi-graded rings. This paper contributes to the research on noncommutative geometry for semi-graded rings developed in the literature (e.g. \cite{Fajardoetal2020, HernandezReyes2020, Lezama2020, Lezama2021, LezamaLatorre2017, LezamaGomez2019, ReyesSarmiento2022, SuarezReyesSuarez2023}, and references therein).

The article is organized as follows. In Section \ref{definitionsandpreliminaries}, we recall some definitions and preliminaries on schematic semi-graded rings that are necessary for the rest of the paper. In Section \ref{semigradedsettingnew} we present the original results of the paper. Our key results are Propositions \ref{Adjointparfirst} and \ref{Adjointparsecond}, and Theorem \ref{Smith2003Theorem3.2Generalization}. Finally, Section \ref{conclusionsfuturework} contains some ideas for future work.

Throughout the paper, the term ring means an associative ring with identity not necessarily commutative. The symbols $\mathbb{N}$ and $\mathbb{Z}$ denote the set of natural numbers including zero and the ring of integer numbers, respectively. The term module will always mean left module unless stated otherwise. The category of left $R$-modules is written as $\mathsf{Mod} R$.

\section{Definitions and preliminaries}\label{definitionsandpreliminaries}

Lezama and Latorre \cite{LezamaLatorre2017} presented an introduction to the non-commutative algebraic geometry for non-$\mathbb{N}$-graded algebras and finitely non-graded algebras by defining a new class of rings, the {\em semi-graded rings}. These rings extend several kinds of non-commutative rings of polynomial type such as Ore extensions \cite{Ore1931, Ore1933}, families of differential operators generalizing Weyl algebras and universal enveloping algebras of finite dimensional Lie algebras such as PBW extensions \cite{BellGoodearl1988}, algebras appearing in mathematical physics \cite{IPR01, ReyesSarmiento2022}, ambiskew polynomial rings \cite{Jordan2000}, 3-dimensional skew polynomial rings \cite{BellSmith1990, ReyesSarmiento2022, Rosenberg1995}, bi-quadratic algebras on 3 generators with PBW bases \cite{Bavula2023}, skew PBW extensions \cite{GallegoLezama2011, LezamaReyes2014}, and other algebras having PBW bases \cite{GolovashkinMaksimov1998, GolovashkinMaksimov2005, NinoReyes2023}. A detailed list of examples of semi-graded rings and its relationships with other algebras can be found in Fajardo et al. \cite{Fajardoetal2020}. 

\begin{definition}[{\cite[Definition 2.1]{LezamaLatorre2017}}]\label{def.SG2}
Let $R$ be a ring. $R$ is said to be {\em semi-graded} ($\mathsf{SG}$) if there exists a collection $\{R_n\}_{n\in\mathbb{Z}}$ of subgroups $R_n$ of the additive group $R^{+}$ such that the following conditions hold:
\begin{enumerate}
    \item [\rm (i)] $R=\bigoplus\limits_{n\in\mathbb{Z}}R_n$.
    \item [\rm (ii)] For every $m,n\in\mathbb{Z}$, $R_mR_n\subseteq \bigoplus\limits_{k\leq m+n} R_k$. 
    \item [\rm (iii)] $1\in R_0$.
\end{enumerate}
\end{definition}
The collection $\{R_n\}_{n\in\mathbb{Z}}$ is called {\em a semi-graduation of} $R$, and we say that the elements of $R_n$ are {\em homogeneous of degree} $n$. $R$ is {\em positively semi-graded} if $R_n=0$ for every $n<0$. If $R$ and $S$ are semi-graded rings and $f: R\rightarrow S$ is a ring homomorphism, then $f$ is called {\em homogeneous} if $f(R_n)\subseteq S_n$ for every $n\in\mathbb{Z}$. 

Note that $\mathbb{N}$-graded rings are positively $\mathsf{SG}$. Ring-theoretical, algebraic and geometric properties of semi-graded rings have been investigated by some mathematicians (e.g., \cite{AbdiTalebi2024, Artamonov2015, Bavula2023, Hamidizadehetal2020, Hashemietal2017, HigueraReyes2023, LouzariReyes2020, ReyesSuarez2020, Seiler2010, SuarezChaconReyes2022, SuarezReyesSuarez2023, Tumwesigyeetal2020}, and references therein).

\begin{definition}[{\cite[Definition 2.2]{LezamaLatorre2017}}]
    Let $R$ be an $\mathsf{SG}$ ring and let $M$ be an $R$-module. We say that $M$ is  {\em semi-graded} if there exists a collection $\{M_n\}_{n\in\mathbb{Z}}$ of subgroups $M_n$ of the additive group $M^+$ such that the following conditions hold:
    \begin{enumerate}
        \item [\rm (i)] $M=\bigoplus\limits_{n\in\mathbb{Z}}M_n$.
        \item [\rm (ii)] For every $m\geq 0$ and $n\in\mathbb{Z}$, $R_mM_n\subseteq \bigoplus\limits_{k\leq m+n}M_k$.
    \end{enumerate}
\end{definition}

The collection $\{M_n\}_{n\in\mathbb{Z}}$ is called {\em a semi-graduation of} $M$, and we say that the elements of $M_n$ are {\em homogeneous of degree} $n$. $M$ is said to be {\em positively semi-graded} if $M_n=0$ for every $n<0$. Let $f: M\rightarrow N$ be a homomorphism of $R$-modules, where $M$ and $N$ are semi-graded $R$-modules, if $f(M_n)\subseteq N_n$ for every $n\in\mathbb{Z}$, then $f$ is called {\em homogeneous}. Let $R$ be an $\mathsf{SG}$ ring, $M$ an $\mathsf{SG}$ $R$-module, and $N$ a submodule of $M$, we say that $N$ is a {\em semi-graded} ($\mathsf{SG}$) {\em submodule of} $M$ if $N=\bigoplus\limits_{n\in\mathbb{Z}}N_n$ where $N_n=M_n\cap N$. In this case, $N$ is an SG $R$-module \cite[Definition 2.3]{LezamaLatorre2017}. 

Different properties of modules over families of semi-graded rings have been investigated in \cite{Fajardoetal2020, Lezama2020, LouzariReyesSpringer2020, NinoRamirezReyes2020, NinoReyes2023, Reyes2019}.

From \cite[Proposition 2.6]{LezamaLatorre2017}, we know that if $R$ is an $\mathsf{SG}$ ring, $M$ is an $\mathsf{SG}$ $R$-module and $N$ is a submodule of $M$, then the following conditions are equivalent:
\begin{enumerate}
        \item [\rm (1)] $N$ is a semi-graded submodule of $M$.
        \item [\rm (2)] For every $z\in N$, the homogeneous components of $z$ are in $N$.
        \item [\rm (3)] $M/N$ is an $\mathsf{SG}$ $R$-module with semi-graduation given by 
        \[
        (M/N)_n=(M_n+N)/N,\ n\in\mathbb{Z}.
        \]
\end{enumerate}

If $M$ is an $\mathsf{SG}$ $R$-module and $\{N_i\}_{i\in I}$ is a family of $\mathsf{SG}$ submodules of $M$, then it is clear that $\bigcap\limits_{i\in I} N_i$ is an $\mathsf{SG}$ submodule of $M$.

Let $X$ be a subset of $M$. We define the $\mathsf{SG}$ submodule generated by $X$ as the intersection of all $\mathsf{SG}$  submodules containing $X$, and we will denote it as $\langle X\rangle^{\mathsf{SG}}$. 

In a similar way, if $R$ is a positively $\mathsf{SG}$ ring, for $t\in \mathbb{N}$, we define $R_{\ge t}$ as the intersection of all two-sided ideals that are $\mathsf{SG}$ submodules containing $\bigoplus\limits_{k\ge t}R_k$. 

Consider an element $n\in \mathbb{Z}$ and the following sets \cite[Section 3.1]{ChaconReyes2022}:
\begin{align*}
R'_n := &\ \{r\in R_n\mid \ {\rm for\ all}\ m\in\mathbb{Z},\ {\rm and}\ {\rm for\ all}\ h\in R_m, rh\in R_{n+m}\},\\
R''_n := &\ \{r\in R'_n\mid \ {\rm for\ all}\ m\in\mathbb{Z},\ {\rm and}\ {\rm for\ all}\ h\in R_m, hr\in R_{n+m}\},\\
R' := &\ \bigcup_{n\in\mathbb{Z}}R'_n,\\
R'' := &\ \bigcup_{n\in\mathbb{Z}}R''_n.
\end{align*}

For $R$ an $\mathsf{SG}$ ring and a left Ore set $S$ of $R$, the first and third authors \cite[Definition 3.8]{ChaconReyes2022} called $S$ {\em good} if the following conditions hold:
    \begin{enumerate}
        \item [\rm (i)] $S\subseteq R''$, and 
        \item [\rm (ii)] if $s\in S$ and $r\in R'$, then there exist elements $u\in R'$ and $v\in S$ such that $us=vr$.
    \end{enumerate}

If $R$ is an $\mathsf{SG}$ ring and $M$ is an $\mathsf{SG}$ $R$-module, then $M$ is called {\em localizable semi-graded} ($\mathsf{LSG}$) if for every element $(n,m)\in \mathbb{Z}^2$, the inclusion $R'_nM_m\subseteq M_{n+m}$ holds \cite[Definition 3.9]{ChaconReyes2022}. The importance of good left Ore sets can be appreciated in the following remark:
\begin{remark}[{\cite[Propositions 3.10, 3.11 and 3.12]{ChaconReyes2022}}]
    Let $R$ be an $\mathsf{SG}$ ring and $S$ a good left Ore set of $R$. Then:
    \begin{enumerate}
        \item [\rm (i)] If $M$ is an $\mathsf{LSG}$ $R$-module, then $S^{-1}M$ is an $\mathsf{LSG}$ $R$-module with semi-graduation given by
\[
    (S^{-1}M)_n = \left\{\frac{f}{s}\mid f\in \bigcup_{k\in\mathbb{Z}} M_k,\ {\rm deg}(f) - {\rm deg}(s)=n \right\}.
\]
\item [\rm (ii)] $S^{-1}R$ is an $\mathsf{SG}$ ring with semigraduation given by
    \[
    (S^{-1}R)_n=\left\{\frac{f}{s}\mid f\in \bigcup_{k\in\mathbb{Z}} R_k,\ {\rm deg}(f) - {\rm deg}(s)=n \right\}.
    \]
\item [\rm (iii)] If $M$ is an $\mathsf{LSG}$ $R$-module, then $S^{-1}M$ is an $\mathsf{SG}$ $S^{-1}R$-module.
    \end{enumerate}
\end{remark}

Having in mind the algebraic properties mentioned above, let $\mathsf{SGR}$ be the {\em category of semi-graded rings} whose objects are the semi-graded rings and whose morphisms are the homogeneous ring homomorphisms. If we fix an $\mathsf{SG}$ ring $R$, $\mathsf{SGR}-R$ denotes the {\em category of semi-graded modules over} $R$ whose morphisms are the homogeneous $R-$homomorphisms. It is straightforward to see that $\mathsf{SGR}-R$ is preadditive, and that its zero object is the trivial module. Let $f:M\rightarrow N$ be a morphism in $\mathsf{SGR}-R$. Since ${\rm Ker}(f)$ and ${\rm Im}(f)$ are semi-graded submodules, it follows that $N/{\rm Im}(f)$ is a semi-graded module. This fact guarantees that the category $\mathsf{SGR}-R$ has kernels and cokernels. If $f$ is a monomorphism of $\mathsf{SGR}-R$, then $f$ is the kernel of the canonical homomorphism $j:N\rightarrow N/{\rm Im}(f)$. If $f$ is an epimorphism, then $f$ is the cokernel of the inclusion $i:{\rm Ker}(f)\rightarrow M$. In this way, the category $\mathsf{SGR}-R$ is normal and conormal.

If $\{M_i\}_{i\in I}$ is a family of objects of $\mathsf{SGR}-R$, then their direct sum $\bigoplus\limits_{i\in I} M_i$ is a semi-graded ring with semi-graduation given by
\[
\biggl(\bigoplus_{i\in I} M_i\biggr)_p := \bigoplus_{i\in I}(M_i)_p, \ p\in\mathbb{Z}.
\]

It is easy to see that this object with the natural inclusions coincides with the coproduct of the familiy of objects $\{M_i\}_{i\in I}$ in $\mathsf{SGR}-R$. Therefore, $\mathsf{SGR}-R$ is an Abelian category. Finally, let $\mathsf{LSG}-R$ be the full subcategory of $\mathsf{SGR}-R$ whose objects are the  $\mathsf{LSG}$ $R$-modules. This subcategory is closed for subobjects, quotients and coproducts, so it is Abelian (see \cite[Section 3.2]{ChaconReyes2022} for more details).

Let $R$ be an $\mathsf{SG}$-ring and $M$ an $R$-module. An element $m\in M$ is a {\em torsion element} if there exist $n, t \ge 0$ such that $R_{\ge t}^n m = 0$. The set of torsion elements of $M$ is denoted by $T(M)$. $M$ is called a {\em torsion module} if $T(M)=M$, and it is said to be {\em torsion-free} if $T(M)=0$. From \cite[Remark 5.4]{LezamaLatorre2017} we know that $T(M)$ is an $R$-submodule of $M$.

The following notion of {\em schematicness} introduced by Chac\'on and Reyes \cite{ChaconPhD2022, ChaconReyes2022} to the class of semi-graded rings extends the corresponding defined by Van Oystaeyen and Willaert \cite{VanOystaeyenWillaert1995} in the $\mathbb{N}$-graded case. Before, for a positively $\mathsf{SG}$ ring $R$, if $R_+ :=\bigoplus\limits_{k\ge 1} R_k$, then we say that a left Ore set $S$ is {\em non-trivial} if $S\cap R_+\neq\emptyset$.

\begin{definition}[{\cite[Definition 4.1]{ChaconReyes2022}}]\label{def.schematic}
Let $R$ be a positively $\mathsf{SG}$ left Noetherian ring. $R$ is called ({\em left}) {\em schematic} if there is a finite set $I$ of non-trivial good left Ore sets of $R$ such that for each $(x_S)_{S\in I}\in \prod\limits_{S\in I}S$, there exist $t,m\in \mathbb{N}$ such that $(R_{\ge t})^m\subseteq\sum\limits_{S\in I}Rx_s$.
\end{definition}

If $R$ is schematic by considering the {\em good} left Ore sets $S_i$, and defining for each $i$,
\[
\kappa_{S_i}(M)=\{m\in M\mid \exists s\in S_i, sm=0 \}
\]
then $\bigcap\limits_{i=1}^n\kappa_{S_i}(M) = T(M)$ for every $\mathsf{SG}$ $R$-module $M$. If $M$ is an $\mathsf{LSG}$ $R$-module, then for each $i = 1,\dotsc, n$, $\kappa_{S_i}(M)$ is an $\mathsf{SG}$ submodule of $M$, and so $T(M)$ is also an $\mathsf{SG}$ submodule of $M$. These facts imply that $M/T(M)$ is an $\mathsf{SG}$ $R$-module.

\section{Maps between schematic semi-graded rings}\label{semigradedsettingnew}

This section contains the original results of the paper. We start with the following fact on morphisms in the category $\mathsf{SGR}-R$.

\begin{proposition}\label{PropositionstartingSG}
Let $f:N\rightarrow M$ be a morphism of $\mathsf{SGR}-R$ and $X\subseteq N$. Then $f(\langle X\rangle^{\mathsf{SG}})=\langle f(X)\rangle^{\mathsf{SG}}$.
\end{proposition}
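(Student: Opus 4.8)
The statement is the natural "semi-graded" analogue of the fact that a module homomorphism sends the submodule generated by a set to the submodule generated by its image. The plan is to prove the two inclusions separately, using the characterization of $\mathsf{SG}$ submodules in terms of homogeneous components recalled after \cite[Proposition 2.6]{LezamaLatorre2017}, namely that a submodule $N'$ is semi-graded precisely when for every $z\in N'$ all homogeneous components of $z$ lie in $N'$, together with the universal property built into the definition of $\langle X\rangle^{\mathsf{SG}}$ as the intersection of all $\mathsf{SG}$ submodules containing $X$.

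First I would show $f(\langle X\rangle^{\mathsf{SG}})\subseteq\langle f(X)\rangle^{\mathsf{SG}}$. Consider the preimage $P:=f^{-1}(\langle f(X)\rangle^{\mathsf{SG}})$. Since $f$ is an $R$-module homomorphism, $P$ is a submodule of $N$, and it contains $X$ because $f(X)\subseteq\langle f(X)\rangle^{\mathsf{SG}}$. I claim $P$ is an $\mathsf{SG}$ submodule of $N$: if $z\in P$ with homogeneous decomposition $z=\sum_n z_n$, then since $f$ is homogeneous, $f(z_n)$ is the degree-$n$ homogeneous component of $f(z)\in\langle f(X)\rangle^{\mathsf{SG}}$; as $\langle f(X)\rangle^{\mathsf{SG}}$ is semi-graded, each $f(z_n)$ lies in it, so each $z_n\in P$. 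Hence $P$ is an $\mathsf{SG}$ submodule containing $X$, so $\langle X\rangle^{\mathsf{SG}}\subseteq P$, which gives $f(\langle X\rangle^{\mathsf{SG}})\subseteq\langle f(X)\rangle^{\mathsf{SG}}$.

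For the reverse inclusion $\langle f(X)\rangle^{\mathsf{SG}}\subseteq f(\langle X\rangle^{\mathsf{SG}})$, I would argue that $f(\langle X\rangle^{\mathsf{SG}})$ is itself an $\mathsf{SG}$ submodule of $M$ containing $f(X)$, and then invoke minimality. It is a submodule since $f$ is an $R$-homomorphism, and it clearly contains $f(X)$. To see it is semi-graded: $\langle X\rangle^{\mathsf{SG}}$ is an $\mathsf{SG}$ submodule of $N$, hence an $\mathsf{SG}$ $R$-module in its own right, and the restriction $f|_{\langle X\rangle^{\mathsf{SG}}}$ is a homogeneous homomorphism; therefore its image $\mathrm{Im}(f|_{\langle X\rangle^{\mathsf{SG}}})=f(\langle X\rangle^{\mathsf{SG}})$ is an $\mathsf{SG}$ submodule of $M$, exactly as recorded in the discussion of the category $\mathsf{SGR}-R$ (kernels and images of homogeneous maps are semi-graded submodules). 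By the defining minimality of $\langle f(X)\rangle^{\mathsf{SG}}$ we conclude $\langle f(X)\rangle^{\mathsf{SG}}\subseteq f(\langle X\rangle^{\mathsf{SG}})$, and combining the two inclusions finishes the proof.

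The only point requiring a little care — and the one I would single out as the main (if modest) obstacle — is the verification that taking homogeneous components commutes with $f$ and that preimages and images of $\mathsf{SG}$ submodules under a homogeneous map are again $\mathsf{SG}$; both follow cleanly from $f(R_n\text{-part})\subseteq M_n$ and the homogeneous-components criterion, so no genuine difficulty arises, and in particular neither localizability ($\mathsf{LSG}$) nor the Ore-set machinery is needed here.
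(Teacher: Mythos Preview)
Your proof is correct and follows essentially the same approach as the paper: both directions are obtained from the minimality defining $\langle-\rangle^{\mathsf{SG}}$, using that images and preimages of $\mathsf{SG}$ submodules under a homogeneous homomorphism are again $\mathsf{SG}$ submodules. The paper treats the two inclusions in the opposite order and, for the inclusion $f(\langle X\rangle^{\mathsf{SG}})\subseteq\langle f(X)\rangle^{\mathsf{SG}}$, works with an arbitrary $\mathsf{SG}$ submodule $M'\supseteq f(X)$ rather than directly with $f^{-1}(\langle f(X)\rangle^{\mathsf{SG}})$, but this is only a cosmetic difference; your version is in fact slightly more detailed, since you spell out via the homogeneous-components criterion why the preimage is semi-graded, whereas the paper simply asserts it.
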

\begin{proof}
Let $y\in \langle f(X)\rangle^{\mathsf{SG}}$. Then $y\in M'$ for every $\mathsf{SG}$-submodule $M'$ of $M$ with $f(X) \subseteq M'$. Since $\langle X\rangle^{\mathsf{SG}}$ is an $\mathsf{SG}$-submodule of $N$ and $X \subseteq \langle X\rangle^{\mathsf{SG}}$, then $f(\langle X\rangle^{\mathsf{SG}})$ is an $\mathsf{SG}$-submodule of $M$ such that $f(X) \subseteq f(\langle X\rangle^{\mathsf{SG}})$, whence $y\in f(\langle X\rangle^{\mathsf{SG}})$. This shows that $\langle f(X)\rangle^{\mathsf{SG}}\subseteq f(\langle X\rangle^{\mathsf{SG}})$.

On the other hand, if $y\in f(\langle X\rangle^{\mathsf{SG}})$ then there exists $x\in \langle X\rangle^{\mathsf{SG}}$ with $f(x)=y$. Let $M'$ be an $\mathsf{SG}$-submodule of $M$ such that $M'\supseteq f(X)$. Then $f^{-1}(M')$ is an $\mathsf{SG}$-submodule of $N$ with $X \subseteq f^{-1}(M')$, which implies that $x\in f^{-1}(M')$, and hence $y = f(x)\in M'$ and $y\in \langle f(X)\rangle^{\mathsf{SG}}$. Thus, $f(\langle X\rangle^{\mathsf{SG}})\subseteq \langle f(X)\rangle^{\mathsf{SG}}$.   
\end{proof}

Let $\mathsf{TOR}-R$ be the full subcategory of $\mathsf{SGR}-R$ consisting of the torsion modules of $R$. Following the ideas presented by Lezama and Latorre \cite[Theorem 5.5]{LezamaLatorre2017}, it is easy to see that $\mathsf{TOR}-R$ is a Serre subcategory of $\mathsf{SGR}-R$.

For $R$ and $S$ $\mathsf{SG}$ rings, if $f:R\rightarrow S$ is a homomorphism of $\mathsf{SG}$ rings and $M$ is an $\mathsf{SG}$ $S$-module, by defining $rm:=f(r)m$, for elements $r\in R_a$ and $m\in M_b$, we get $f(r)\in S_a$, and so $rm = f(r)m \in \bigoplus\limits_{c\le a+b}M_c$. This guarantees that $M$ is also an $\mathsf{SG}$ $R$-module with the semi-graduation given as $S$-module. Under these conditions, it is clear that if $g:M\rightarrow N$ is a homomorphism of $\mathsf{SG}$ $S$-modules, then $g$ is also a homomorphism of $\mathsf{SG}$ $R$-modules. In this way, we obtain the functor 
\begin{align}\label{functorone}
    f_*:\mathsf{SGR-S} &\ \rightarrow \mathsf{SGR-R},\\
    M &\ \mapsto f_{*}(M)=M\\
    g &\ \mapsto f_{*}(g)=g
\end{align}

where $f_*(M)$ is the same module $M$ considered as an $R$-module. In particular, if $J$ is an $\mathsf{SG}$ ideal of $R$ (that is, a two-sided ideal that is an $\mathsf{SG}$ submodule of $R$), then $R/J$ is an $\mathsf{SG}$ ring and the canonical morphism $f:R\rightarrow R/J$ is homogeneous. This fact guarantees the existence of the functor
\begin{equation}\label{functortwo}
 f_*:\mathsf{SGR}-R/J\rightarrow \mathsf{SGR}-R.
\end{equation}

As in the case of Smith \cite[Theorem 3.2]{Smith2003}, \cite[Theorem 1.2]{Smith2016}, we are interested in the existence of induced $\mathsf{closed\  immersions}$ of projective spaces, so our aim in this section is to define a pair of adjoint functors to $f_{*}$.

\begin{definition}
Consider $M$ an $\mathsf{SG}$ $R$-module. We define $f^!(M)$ as the largest $\mathsf{SG}$ $R$-submodule of $M$ that is annihilated by $J$. 
\end{definition}

We have the following immediate result on $f^!(M)$. 

\begin{proposition}\label{Propoprel1}
$f^!(M)$ is an $\mathsf{SG}$ $R/J$-module by defining $\overline{r}m = rm$, for every $\overline{r}\in R/J$ and each $m\in f^!(M)$.
\begin{proof}
Let $r,s\in R$ with $\overline{r}=\overline{s}$. Then $r-s\in J$, and since $m\in f^!(M)$, $m$ is annihilated by $J$, which implies that $0 = (r-s)m = rm-sm$, i.e., $rm = sm$. It is clear that the product defined satisfies the definition of module. Finally, for elements $\overline{r}\in \left(R/J\right)_a$ and $m\in M_b$, without loss of generality we can take $r\in R_a$, and get $\overline{r}m=rm\in \bigoplus_{c\le a+b}M_c$, whence the assertion follows.
\end{proof}
\end{proposition}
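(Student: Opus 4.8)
The plan is to check, in order, three things: that the prescribed action $\overline{r}\cdot m := rm$ is well-defined on $f^!(M)$, that it satisfies the module axioms over $R/J$, and that it is compatible with the semi-graduation that $f^!(M)$ inherits from $M$.

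First I would dispatch well-definedness and closure. If $\overline{r} = \overline{s}$ in $R/J$ then $r-s \in J$, and since every element of $f^!(M)$ is annihilated by $J$ we get $(r-s)m = 0$, so $rm = sm$; hence the value $\overline{r}m$ is independent of the chosen representative. Closure of $f^!(M)$ under the action is automatic because $f^!(M)$ is an $R$-submodule of $M$, so $rm \in f^!(M)$ whenever $m \in f^!(M)$; one may additionally observe, using that $J$ is two-sided, that $J(rm) = (Jr)m \subseteq Jm = 0$, which re-confirms that $rm$ still lies in the part of $M$ annihilated by $J$. The $R/J$-module axioms (associativity of the action, distributivity on both sides, and $\overline{1}\cdot m = m$) then reduce formally to the known $R$-module axioms on $M$, since the $R/J$-action is obtained by precomposing with the surjection $f : R \to R/J$.

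The remaining, and slightly more delicate, step is the semi-graduation axiom. Here I would use two facts from Section \ref{definitionsandpreliminaries}: that $f^!(M)$, being an $\mathsf{SG}$ $R$-submodule of $M$, has semi-graduation $(f^!(M))_n = M_n \cap f^!(M)$ and that, by the equivalence recalled after Proposition 2.6, the homogeneous components in $M$ of any element of $f^!(M)$ again lie in $f^!(M)$; and that $R/J$ carries the quotient semi-graduation $(R/J)_a = (R_a + J)/J$. Given $a \ge 0$, $b \in \mathbb{Z}$, $\overline{r} \in (R/J)_a$ and $m \in (f^!(M))_b$, the key move is to choose the representative $r$ inside $R_a$ (this is possible precisely by the description of $(R/J)_a$). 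Then $\overline{r}m = rm$, and since $M$ is an $\mathsf{SG}$ $R$-module we have $rm \in \bigoplus_{c \le a+b} M_c$; intersecting with $f^!(M)$ and using that its homogeneous components stay in $f^!(M)$ yields $rm \in \bigoplus_{c \le a+b} (f^!(M))_c$, which is condition (ii) in the definition of a semi-graded module.

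I expect the only real obstacle to be this last point: one must be careful to lift $\overline{r}$ to a \emph{homogeneous} element of $R_a$ rather than to an arbitrary preimage, and then argue that truncating $rm$ to degrees $\le a+b$ stays within the submodule $f^!(M)$ — which is exactly where the characterization of semi-graded submodules via homogeneous components is invoked. Everything else is a routine transport of structure along $f$.
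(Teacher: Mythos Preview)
Your proposal is correct and follows essentially the same approach as the paper: well-definedness via $r-s\in J$ annihilating $m$, module axioms by transport along $f$, and the semi-graduation check by lifting $\overline{r}\in(R/J)_a$ to a homogeneous representative $r\in R_a$. You are in fact more careful than the paper in the final step, explicitly passing from $rm\in\bigoplus_{c\le a+b}M_c$ to $rm\in\bigoplus_{c\le a+b}(f^!(M))_c$ via the description $(f^!(M))_c=M_c\cap f^!(M)$; the paper leaves this implicit.
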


\begin{proposition}\label{Propoprel2}
Let $g:M\rightarrow N$ be a morphism of $\mathsf{SG}$ $R$-modules. Then $g(f^!(M))\subseteq f^!(N)$.
\begin{proof}
Consider $m\in f^!(M)$ and $j\in J$. Since $m$ is annihilated by $J$, $jm = 0$, and so $jg(m)=g(jm)=g(0)=0$. This shows that $g(m)$ is annihilated by $J$, and so $g(m)\in f^!(N)$. 
\end{proof}
\end{proposition}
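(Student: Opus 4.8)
The plan is to show that $g(f^!(M))$ is itself an $\mathsf{SG}$ $R$-submodule of $N$ that is annihilated by $J$; since $f^!(N)$ is \emph{by definition} the largest submodule of $N$ with these two properties, the inclusion $g(f^!(M))\subseteq f^!(N)$ follows at once from maximality. So the work splits into exactly two verifications: the annihilation property and the semi-gradedness of the image.

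First I would fix $m\in f^!(M)$ and deal with annihilation. By definition $f^!(M)$ is annihilated by $J$, so $jm=0$ for every $j\in J$. Using that $g$ is an $R$-module homomorphism, $j\,g(m)=g(jm)=g(0)=0$, so $g(m)$ is annihilated by $J$. As $m$ was arbitrary, the $R$-submodule $g(f^!(M))$ of $N$ is annihilated by $J$.

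Next I would check that $g(f^!(M))$ is an $\mathsf{SG}$ submodule of $N$. Since $f^!(M)$ is an $\mathsf{SG}$ submodule, it coincides with its $\mathsf{SG}$-closure, $\langle f^!(M)\rangle^{\mathsf{SG}}=f^!(M)$; applying Proposition \ref{PropositionstartingSG} to the morphism $g$ then yields $g(f^!(M))=g(\langle f^!(M)\rangle^{\mathsf{SG}})=\langle g(f^!(M))\rangle^{\mathsf{SG}}$, so $g(f^!(M))$ is indeed $\mathsf{SG}$. (Equivalently, one can argue directly: for $x\in f^!(M)$ the homogeneous components $x_n$ lie in $f^!(M)$, and homogeneity of $g$ makes $g(x_n)\in N_n$ the homogeneous components of $g(x)$, so $g(f^!(M))$ satisfies characterization (2) of $\mathsf{SG}$ submodules.)

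Finally, combining the two steps, $g(f^!(M))$ is an $\mathsf{SG}$ $R$-submodule of $N$ annihilated by $J$, hence is contained in the largest such submodule, namely $f^!(N)$. I do not expect a genuine obstacle here: the argument is essentially formal, and the only point that needs slightly more than a one-line manipulation is the stability of semi-gradedness under images, which is precisely what Proposition \ref{PropositionstartingSG} provides (and which matters because taking the $\mathsf{SG}$-closure of a merely $J$-annihilated submodule need not preserve $J$-annihilation).
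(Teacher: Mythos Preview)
Your proof is correct and follows the same underlying idea as the paper's: show that $g(f^!(M))$ is annihilated by $J$ and conclude by maximality of $f^!(N)$. The annihilation computation $j\,g(m)=g(jm)=0$ is identical in both.

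The only difference is one of explicitness. The paper's proof passes directly from ``$g(m)$ is annihilated by $J$'' to ``$g(m)\in f^!(N)$'' without spelling out that $g(f^!(M))$ is an $\mathsf{SG}$ submodule; this is tacitly relying on the fact, recorded earlier in Section~\ref{definitionsandpreliminaries}, that images of morphisms in $\mathsf{SGR}{-}R$ are $\mathsf{SG}$ submodules. You make this step explicit, invoking Proposition~\ref{PropositionstartingSG} (or the direct component argument), and you correctly flag why it matters: the set of $J$-annihilated elements of $N$ need not itself be semi-graded in the semi-graded setting, so one cannot simply identify $f^!(N)$ with the full annihilator. Your version is therefore a bit more careful, but the strategy is the same.
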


Propositions \ref{Propoprel1} and \ref{Propoprel2} allow us to define the functor 
\begin{align}\label{functorthree}
f^{!}: \mathsf{SGR}-R &\ \rightarrow \mathsf{SGR}-R/J\\
M &\ \mapsto f^{!}(M)\\
g &\ \mapsto f^!(g) = g|_{f^!(M)},
\end{align}

that sends every morphism to its restriction. If no confusion arises, we write $f^!(g) = g$.

\begin{proposition}\label{Adjointparfirst}
$(f_*,f^!)$ is an adjoint pair.
\begin{proof}
Consider $M$ and $N$ be objects belonging to $\mathsf{SGR}-R/J$ and $\mathsf{SGR}-R$, respectively. If $g:f_{*}(M) \rightarrow N$ is a morphism of $\mathsf{SGR}-R$, then it is clear that ${\rm Im}(g)\subseteq f^!(N)$, and hence we get the isomorphism $\mu_{MN}:{\rm Hom}(M,f^!(N))\rightarrow {\rm Hom}(f_*(M),N)$ given by $\mu_{MN}(g)=g$. This guarantees the required natural isomorphism.
\end{proof}
\end{proposition}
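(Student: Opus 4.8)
The plan is to prove the adjunction directly through the universal property built into the definition of $f^{!}$, rather than by producing a unit and counit by hand. Fix objects $M$ of $\mathsf{SGR}-R/J$ and $N$ of $\mathsf{SGR}-R$. The first thing I would record is that $J$ annihilates $f_{*}(M)$: since $f$ is the canonical projection and $M$ already carries an $R/J$-module structure, $r\cdot m = f(r)m = \overline{0}\cdot m = 0$ for all $r\in J$ and $m\in M$. Consequently, given any morphism $g\colon f_{*}(M)\to N$ in $\mathsf{SGR}-R$, its image ${\rm Im}(g)$ is an $\mathsf{SG}$ $R$-submodule of $N$ — this is the general fact about images of morphisms in $\mathsf{SGR}-R$ recalled in Section \ref{definitionsandpreliminaries} — and it is annihilated by $J$, because $j\,g(m)=g(jm)=0$ for every $j\in J$. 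By the maximality clause in the definition of $f^{!}(N)$ this forces ${\rm Im}(g)\subseteq f^{!}(N)$, so $g$ corestricts to a unique map $M\to f^{!}(N)$ which is $R/J$-linear by Proposition \ref{Propoprel1} and homogeneous because $g$ is, $f^{!}(N)$ carrying the induced semi-graduation $f^{!}(N)_n = N_n\cap f^{!}(N)$.

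In the other direction I would send a morphism $h\colon M\to f^{!}(N)$ of $\mathsf{SGR}-R/J$ to the composite of $f_{*}(h)$ with the homogeneous inclusion $f^{!}(N)\hookrightarrow N$, i.e. to $h$ viewed as a map $f_{*}(M)\to N$; this is a homogeneous $R$-homomorphism once one notes, again by Proposition \ref{Propoprel1}, that on $f^{!}(N)$ the $R$-action obtained by restriction of scalars along $f$ coincides with its $R/J$-action. These two assignments are the map $\mu_{MN}\colon {\rm Hom}(M,f^{!}(N))\to {\rm Hom}(f_{*}(M),N)$ and a candidate inverse; since each merely (co)restricts the same underlying additive function, they are visibly mutually inverse, so $\mu_{MN}$ is a bijection (and, under the usual abuse, equals the identity on underlying maps, exactly as asserted in the statement). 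It then remains to verify naturality in both variables: for $\varphi\colon M'\to M$ in $\mathsf{SGR}-R/J$ and $\psi\colon N\to N'$ in $\mathsf{SGR}-R$, the relevant square commutes because $f_{*}$ is literally restriction of scalars, $f^{!}(\psi)=\psi|_{f^{!}(N)}$ is well defined by Proposition \ref{Propoprel2}, and every map in sight is the same set map as before, so commutativity reduces to associativity of composition. This yields the natural isomorphism, hence the adjoint pair $(f_{*},f^{!})$, exhibiting $f^{!}$ as a right adjoint of the restriction functor $f_{*}$.

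I do not anticipate a serious obstacle: the argument is a formal universal-property verification. The one step that genuinely needs attention is the inclusion ${\rm Im}(g)\subseteq f^{!}(N)$, namely checking that ${\rm Im}(g)$ is simultaneously an $\mathsf{SG}$ $R$-submodule of $N$ and annihilated by $J$, so that the maximality in the definition of $f^{!}(N)$ can be invoked; the remaining work is the routine but persistent bookkeeping of keeping the $R$- and $R/J$-module structures (related by restriction along $f$) and the semi-graduations consistent across the two constructions.
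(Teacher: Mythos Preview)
Your proposal is correct and follows essentially the same route as the paper's proof: both arguments hinge on the observation that any morphism $g\colon f_{*}(M)\to N$ in $\mathsf{SGR}-R$ has image contained in $f^{!}(N)$, so that the bijection $\mu_{MN}$ is literally the identity on underlying maps. Your version is more thorough---you spell out why $J$ annihilates $f_{*}(M)$, why ${\rm Im}(g)$ is an $\mathsf{SG}$ submodule annihilated by $J$, and you check naturality explicitly---whereas the paper compresses all of this into ``it is clear that ${\rm Im}(g)\subseteq f^!(N)$'' and asserts the natural isomorphism without further comment.
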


With the aim of defining the other adjoint pair, recall that in the graded setting, to every $R$-module $M$ is assigned the $R/J$-module $M/JM$. However, since in the case of semi-graded rings $JM$ is not necessarily an $\mathsf{SG}$ submodule of $M$ (for instance, take $R = \Bbbk[x]$, $J = Rx$, and $M = A_1(\Bbbk)$, the first Weyl algebra), then we cannot assert that $M/JM$ is an object in $\mathsf{SGR}-R/J$. This fact motivates us to consider $f^*(M) := M/\langle JM\rangle ^{\mathsf{SG}}$. Precisely, since $\langle JM\rangle ^{\mathsf{SG}}$ is an $\mathsf{SG}$ submodule of $M$, then $f^*(M)$ is also an $\mathsf{SG}$ $R$-module, and it is easy to see that $f^*(M)$ is an $\mathsf{SG}$ $R/J$-module by defining $\overline{a}\cdot\overline{m} = \overline{am}$, and considering the semi-graduation of $f^*(M)$ as an $\mathsf{SG}$ $R$-module. In this way, if $\alpha:N\rightarrow M$ is a morphism of $\mathsf{SGR}-R$ modules, then it is possible to define 
\begin{align*}
\alpha^*:f^*(N)&\ \rightarrow f^*(M) \\
\overline{m} &\ \mapsto \overline{\alpha(m)}.
\end{align*}

More exactly,

\begin{proposition}
    $\alpha^*$ is a morphism of the category $\mathsf{SGR}-R/J$.
\begin{proof}
Since $\alpha(JN)\subseteq JM$, then $\alpha(\langle JN\rangle ^{\mathsf{SG}}) = \langle \alpha(JN)\rangle ^{\mathsf{SG}}\subseteq \langle JM\rangle ^{\mathsf{SG}}$, which shows that $\alpha^*$ is well-defined. It is immediate to see that $\alpha^*$ is a homogeneous $R/J$-homomorphism.
\end{proof}
\end{proposition}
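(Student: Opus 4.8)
The plan is to check the three things that make $\alpha^*$ a morphism of $\mathsf{SGR}-R/J$: that the assignment $\overline{m}\mapsto\overline{\alpha(m)}$ is well defined, that it is additive and $R/J$-linear, and that it is homogeneous for the induced semi-graduations. Of these, only well-definedness carries any content; the rest are routine diagram chases once the quotient $f^*$ and its $R/J$-action have been set up as above.

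First I would treat well-definedness. Since $\alpha$ is a homomorphism of $R$-modules, $\alpha(jn)=j\,\alpha(n)$ for $j\in J$ and $n\in N$, so $\alpha(JN)\subseteq JM$. Applying Proposition \ref{PropositionstartingSG} to the morphism $\alpha$ and the subset $X=JN$ gives $\alpha(\langle JN\rangle^{\mathsf{SG}})=\langle\alpha(JN)\rangle^{\mathsf{SG}}$; and because $\langle JM\rangle^{\mathsf{SG}}$ is an $\mathsf{SG}$ submodule of $M$ containing $JM\supseteq\alpha(JN)$, it contains $\langle\alpha(JN)\rangle^{\mathsf{SG}}$ by definition of the $\mathsf{SG}$ submodule generated by a set. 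Hence $\alpha(\langle JN\rangle^{\mathsf{SG}})\subseteq\langle JM\rangle^{\mathsf{SG}}$, so $\alpha$ carries the submodule we factor out of $N$ into the one we factor out of $M$, and therefore descends to a map $\alpha^*\colon f^*(N)\to f^*(M)$ on the quotients.

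Next I would verify additivity and $R/J$-linearity. Additivity is immediate: $\alpha^*(\overline{m}+\overline{m'})=\overline{\alpha(m+m')}=\overline{\alpha(m)}+\overline{\alpha(m')}$, using additivity of $\alpha$ and of the canonical projection. For linearity, $\alpha^*(\overline{a}\cdot\overline{m})=\alpha^*(\overline{am})=\overline{\alpha(am)}=\overline{a\,\alpha(m)}=\overline{a}\cdot\overline{\alpha(m)}=\overline{a}\cdot\alpha^*(\overline{m})$, where the middle equality uses that $\alpha$ is $R$-linear and the outer ones use that the $R/J$-action on $f^*(N)$ and $f^*(M)$ was defined precisely through representatives by $\overline{a}\cdot\overline{m}=\overline{am}$; no compatibility obstruction arises because $\alpha$ respects the $R$-action that this quotient action is built from. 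Finally, since $\alpha$ is homogeneous, $\alpha(N_n)\subseteq M_n$; by the equivalence recalled from \cite[Proposition 2.6]{LezamaLatorre2017} one has $(f^*(N))_n=(N_n+\langle JN\rangle^{\mathsf{SG}})/\langle JN\rangle^{\mathsf{SG}}$ and likewise for $M$, so $\alpha^*$ sends a class with a representative in $N_n$ to a class with a representative in $\alpha(N_n)\subseteq M_n$, giving $\alpha^*((f^*(N))_n)\subseteq(f^*(M))_n$.

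I expect the only nontrivial step to be the well-definedness, and that is exactly where Proposition \ref{PropositionstartingSG} is indispensable: it is what permits replacing the possibly non-$\mathsf{SG}$ submodule $JM$ by its $\mathsf{SG}$ hull $\langle JM\rangle^{\mathsf{SG}}$ without destroying functoriality of $M\mapsto f^*(M)$. Everything after that is bookkeeping forced by the definitions already made.
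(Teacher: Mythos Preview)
Your proof is correct and follows exactly the same approach as the paper: both establish well-definedness via $\alpha(JN)\subseteq JM$ together with Proposition~\ref{PropositionstartingSG} to conclude $\alpha(\langle JN\rangle^{\mathsf{SG}})=\langle\alpha(JN)\rangle^{\mathsf{SG}}\subseteq\langle JM\rangle^{\mathsf{SG}}$, and both treat the remaining verifications as routine. The only difference is that you spell out additivity, $R/J$-linearity, and homogeneity explicitly, whereas the paper dismisses these as immediate.
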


From the discussion above, we assert the existence of the functor 
\begin{equation}\label{functorfour}
f^*:\mathsf{SGR}-R\rightarrow \mathsf{SGR}-R/J,
\end{equation}

such that the following proposition holds:
\begin{proposition}\label{Adjointparsecond}
$(f^*,f_*)$ is an adjoint pair.
\begin{proof}
Consider $M$ an object in $\mathsf{SGR}-R$ and $N$ an object in $\mathsf{SGR}-R/J$. Let $h : M\rightarrow f_*(N)$ a morphism in $\mathsf{SGR}-R$. If $m\in M$ and $j\in J$, then $h(jm) = jh(m) = \overline{j}h(m) = 0$, which implies $JM\subseteq {\rm Ker}(h)$, and hence $\langle JM\rangle^{\mathsf{SG}}\subseteq {\rm Ker}(h)$. Then the map $\lambda_{MN}(h):f^*(M)\rightarrow N$ given by $\lambda_{MN}(h)(\overline{m})=h(m)$ is well-defined, and in fact, it is clear that it is a morphism in $\mathsf{SGR}-R/J$.

On the other hand, if $g:f^*(M)\rightarrow N$ is a morphism in $\mathsf{SGR}-R/J$, it is straightforward to see that the map $\lambda'_{MN}(g):M\rightarrow f_*(N)$ defined by $\lambda'_{MN}(g)(m)=g(\overline{m})$ is a morphism in $\mathsf{SGR}-R$.

Finally, note that the maps 
\begin{align*}
\lambda_{MN} : &\ {\rm Hom}(M,f_*(N))\rightarrow {\rm Hom}(f^*(M),N), \ {\rm and} \\
\lambda'_{MN} : &\ {\rm Hom}(f^*(M),N)\rightarrow {\rm Hom}(M,f_*(N))
\end{align*}

are inverses of each other, and correspond to the natural isomorphisms that guarantee that $(f^*,f_*)$ is an adjoint pair.
\end{proof}
\end{proposition}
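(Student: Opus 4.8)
The plan is to establish the adjunction $(f^*, f_*)$ by exhibiting, for each pair of objects $M \in \mathsf{SGR}-R$ and $N \in \mathsf{SGR}-R/J$, a bijection
\[
{\rm Hom}_{\mathsf{SGR}-R/J}(f^*(M),N)\;\cong\;{\rm Hom}_{\mathsf{SGR}-R}(M,f_*(N)),
\]
and then checking that it is natural in both variables. First I would take a morphism $h: M \to f_*(N)$ in $\mathsf{SGR}-R$ and observe that for $j \in J$ and $m \in M$ we have $h(jm) = jh(m) = f(j)h(m) = \overline{j}\,h(m) = 0$ in $N$, since $f(j) = 0$ in $R/J$; hence $JM \subseteq {\rm Ker}(h)$, and because ${\rm Ker}(h)$ is an $\mathsf{SG}$ submodule of $M$ (kernels are $\mathsf{SG}$ in $\mathsf{SGR}-R$), the minimality of $\langle JM\rangle^{\mathsf{SG}}$ forces $\langle JM\rangle^{\mathsf{SG}} \subseteq {\rm Ker}(h)$. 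This is exactly what licenses the factorization $\lambda_{MN}(h)(\overline{m}) = h(m)$ through the quotient $f^*(M) = M/\langle JM\rangle^{\mathsf{SG}}$; that $\lambda_{MN}(h)$ is $R/J$-linear follows from $\overline{r}\cdot\overline{m} = \overline{rm}$ and the $R$-linearity of $h$, and homogeneity is inherited degree by degree from the semi-graduation of $f^*(M)$.

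Conversely, given $g: f^*(M) \to N$ in $\mathsf{SGR}-R/J$, I would define $\lambda'_{MN}(g)(m) = g(\overline{m})$ and verify it is a morphism in $\mathsf{SGR}-R$: additivity is clear, and for $r \in R$ one has $\lambda'_{MN}(g)(rm) = g(\overline{rm}) = g(\overline{r}\cdot\overline{m}) = \overline{r}\cdot g(\overline{m}) = r\cdot g(\overline{m}) = r\cdot\lambda'_{MN}(g)(m)$, where the action of $R$ on $N = f_*(N)$ is via $f$; homogeneity of $\lambda'_{MN}(g)$ follows from homogeneity of the canonical projection $M \to f^*(M)$ composed with the homogeneous map $g$. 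The composites $\lambda'_{MN}\circ\lambda_{MN}$ and $\lambda_{MN}\circ\lambda'_{MN}$ are then seen to be the identities by a direct unwinding: $\lambda'_{MN}(\lambda_{MN}(h))(m) = \lambda_{MN}(h)(\overline{m}) = h(m)$, and $\lambda_{MN}(\lambda'_{MN}(g))(\overline{m}) = \lambda'_{MN}(g)(m) = g(\overline{m})$, so the two maps are mutually inverse bijections.

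It remains to check naturality of the family $\{\lambda_{MN}\}$ in $M$ and $N$ simultaneously, i.e., that for morphisms $\alpha: M' \to M$ in $\mathsf{SGR}-R$ and $\beta: N \to N'$ in $\mathsf{SGR}-R/J$ the evident square relating ${\rm Hom}(M,f_*(N))$, ${\rm Hom}(M',f_*(N'))$, ${\rm Hom}(f^*(M),N)$ and ${\rm Hom}(f^*(M'),N')$ commutes; this amounts to the identity $\lambda_{M'N'}(f_*(\beta)\circ h\circ\alpha) = \beta\circ\lambda_{MN}(h)\circ\alpha^*$, which one verifies by evaluating both sides at an arbitrary $\overline{m'} \in f^*(M')$ and using $\alpha^*(\overline{m'}) = \overline{\alpha(m')}$ together with the defining formula for $\lambda$. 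I expect no genuine obstacle here: the only point requiring care — and the one place where the semi-graded setting differs from the classical graded one — is that $JM$ itself need not be an $\mathsf{SG}$ submodule (as the Weyl-algebra example in the preceding discussion shows), so one must work with its $\mathsf{SG}$-closure $\langle JM\rangle^{\mathsf{SG}}$; the inclusion $\langle JM\rangle^{\mathsf{SG}} \subseteq {\rm Ker}(h)$ rather than merely $JM \subseteq {\rm Ker}(h)$ is the crux, and it is precisely here that we invoke the fact that $\mathsf{SG}$ submodules are closed under the operations defining $\langle\,\cdot\,\rangle^{\mathsf{SG}}$, so that $\langle JM\rangle^{\mathsf{SG}}$ is the \emph{smallest} $\mathsf{SG}$ submodule containing $JM$ and hence sits inside any $\mathsf{SG}$ submodule — in particular ${\rm Ker}(h)$ — that contains $JM$.
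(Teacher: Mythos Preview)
Your proof is correct and follows essentially the same approach as the paper's: both construct the mutually inverse maps $\lambda_{MN}$ and $\lambda'_{MN}$ via the same formulas, with the key step being that $\langle JM\rangle^{\mathsf{SG}}\subseteq{\rm Ker}(h)$ because ${\rm Ker}(h)$ is an $\mathsf{SG}$ submodule containing $JM$. Your write-up supplies more detail than the paper does---explicitly unwinding the mutual inverse property and spelling out the naturality square---but the underlying argument is identical.
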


\begin{definition}
Let $\mathcal{A}$ be a subcategory of $\mathsf{SGR}-R$ (or $\mathsf{Mod}-R$). We say that an object $E$ of $\mathcal{A}$ is $T$-{\em injective} if the following condition holds: for an object $M$ and $N$ a subobject of $M$ (both in $\mathcal{A}$) with $M/N$ being of $T$-torsion, then every morphism from $N$ to $E$ can be extended to $M$. If this extension is unique, then $E$ is said to be $T$-closed. 
\end{definition}

As it occurs in the graded case, $E$ is $T$-closed if and only if $E$ is $T$-torsionfree and $T$-injective. Note that this notion of being $T$-closed is equivalent to the corresponding introduced by Gabriel \cite{Gabriel1962}. 

If $M$ is an $R$-module, there exists a $T$-closed module $Q(M)$ in $\mathsf{Mod}-R$ which is called the {\it module of quotients} of $M$ satisfying the following property: there exists an injective homomorphism $\Phi_M:M/T(M)\rightarrow Q(M)$ such that ${\rm Coker}(\Phi_M)$ is torsion. For more details on the construction of $Q(M)$, see Goldman \cite[Section 3]{Goldman}, Stenstrom \cite[Chapter IX]{Stenstrom1975} or Van Oystaeyen \cite{VanOystaeyen1978}. 

If $M$ is an object in $\mathsf{LSG}-R$, then $Q(M)$ is also an object in $\mathsf{LSG}-R$ \cite[Theorem 4.11]{ChaconReyes2022}. In this case, the semi-graduation of $Q(M)$ is defined as follows: since $M'={\rm Im}(\Phi_M)$ is isomorphic to $M/T(M)$, then $M'$ has a natural semi-graduation. Now, for $\xi\in Q(M)$, we say that $\xi$ is {\em homogeneous of degree} $k$ if and only if there exist elements $n,t\in\mathbb{N}$ such that $R_{\ge n}^t\xi\subseteq M'$, and for all $s\in R_{\ge t}^n\cap R'$, the element $s\xi$ is homogeneous of degree $k+{\deg}(s)$. By considering this semi-graduation, it is straightforward to see that $\Phi_M$ is a morphism in $\mathsf{LSG}-R$.
 
\begin{proposition}
If $M$ is an $\mathsf{LSG}-R$ module, then $Q(M)$ is $T$-closed in $\mathsf{LSG}-R$.
    \begin{proof}
We know that $Q(M)$ is $T$-torsionfree and $T$-injective in $\mathsf{Mod}-R$. Let $N\in \mathsf{LSG}-R$, $N'$ an $\mathsf{SG}$ submodule of $N$ such that $N/N'$ is $T$-torsion, and $f:N'\rightarrow Q(M)$ an homogeneous $R$-homomorphism. Since $Q(M)$ is $T$-injective in $\mathsf{Mod}-R$, there exists an $R$-homomorphism $g:N\rightarrow Q(M)$ that extends $f$, we only have to show that $g$ is also homogeneous.

Consider $m\in N_k$. By using that $N/N'$ is $T$-torsion, there exist elements $n_1,t_1 \in \mathbb{N}$ with $R_{\ge n_1}^{t_1}m\subseteq N'$. On the other hand, let $M'$ be the isomorphic image of $M/T(M)$ in $Q(M)$. Since $Q(M)/M'$ is $T$-torsion, there exist $n_2,t_2\in\mathbb{N}$ with $R_{\ge n_2}^{t_2}g(m)\subseteq M'$. If $n := {\rm max}\{n_1,n_2\}$ and $t := {\rm max}\{t_1,t_2\}$, then $R_{\ge n}^tg(m)\subseteq M'$, and for every $s\in R_{\ge n}^t\cap R'$, we get that $sm$ is homogeneous and $sm\in N'$ (recall that $N$ is $\mathsf{LSG}$). Hence, $sg(m) = g(sm) = f(sm)\in (M')_{{\rm deg}(s)+k}$ due to that $f$ is homogeneous. By considering the semi-graduation of $Q(M)$, it follows that $g(m)\in (Q(M))_k$, whence $g$ is homogeneous.
\end{proof}
\end{proposition}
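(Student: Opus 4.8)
The plan is to split $T$-closedness into its two equivalent constituents. By the equivalence recorded just above the statement, it suffices to show that $Q(M)$ is $T$-torsionfree and $T$-injective \emph{inside} $\mathsf{LSG}-R$. Torsionfreeness is immediate, since it only concerns the underlying $R$-module and $Q(M)$ is already $T$-torsionfree in $\mathsf{Mod}-R$ by the classical construction of the module of quotients (Goldman, Stenstrom, Van Oystaeyen). Uniqueness of the extension, which the definition of $T$-closed asks for, then comes for free: if $g_1,g_2$ both extend a map defined on a subobject $N'$, their difference vanishes on $N'$, hence factors through the $T$-torsion quotient $N/N'$, hence has $T$-torsion image, which must be $0$ because $Q(M)$ is torsionfree. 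So the whole content is $T$-injectivity in $\mathsf{LSG}-R$.

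To establish it, I would take the usual test data: an object $N$ of $\mathsf{LSG}-R$, an $\mathsf{SG}$ submodule $N'$ of $N$ with $N/N'$ of $T$-torsion, and a homogeneous $R$-homomorphism $f:N'\rightarrow Q(M)$. Since $Q(M)$ is $T$-injective in $\mathsf{Mod}-R$, there is an $R$-linear extension $g:N\rightarrow Q(M)$ with $g|_{N'}=f$; the only remaining point is to check that $g$ is homogeneous, i.e. $g(N_k)\subseteq Q(M)_k$ for all $k\in\mathbb{Z}$.

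For the homogeneity, I would fix $m\in N_k$. Since $N/N'$ is $T$-torsion there are $n_1,t_1\in\mathbb{N}$ with $R_{\ge n_1}^{t_1}m\subseteq N'$, and, writing $M':={\rm Im}(\Phi_M)\cong M/T(M)$, the quotient $Q(M)/M'$ is $T$-torsion, so there are $n_2,t_2\in\mathbb{N}$ with $R_{\ge n_2}^{t_2}g(m)\subseteq M'$. Setting $n:={\rm max}\{n_1,n_2\}$ and $t:={\rm max}\{t_1,t_2\}$, both inclusions hold with these exponents. Then, for every $s\in R_{\ge n}^t\cap R'$, the $\mathsf{LSG}$ hypothesis on $N$ forces $sm$ to be homogeneous of degree ${\rm deg}(s)+k$ while still lying in $N'$, so homogeneity of $f$ yields $s\,g(m)=g(sm)=f(sm)\in (M')_{{\rm deg}(s)+k}$. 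By the very definition of the semi-graduation of $Q(M)$ recalled above, this says exactly that $g(m)$ is homogeneous of degree $k$; hence $g(N_k)\subseteq Q(M)_k$ and $g$ is a morphism of $\mathsf{LSG}-R$ extending $f$.

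I expect the principal difficulty to be precisely this last step: the extension $g$ produced by $T$-injectivity in $\mathsf{Mod}-R$ carries no information about degrees, and the grading of $Q(M)$ is only recovered ``asymptotically'', after multiplying an element into $M'$ by a sufficiently high-degree element of $R'$. Making the degree computation meaningful relies on the well-definedness and compatibility of the semi-graduations on $N$, $N'$, $M'$ and $Q(M)$ from \cite[Theorem 4.11]{ChaconReyes2022}, which in turn rests on the schematic structure of $R$ through its good, non-trivial left Ore sets; once that machinery is granted, the chain of identifications $s\,g(m)=g(sm)=f(sm)$ together with the $\mathsf{LSG}$ property of $N$ does the rest.
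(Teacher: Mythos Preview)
Your proposal is correct and follows essentially the same route as the paper: reduce to checking $T$-injectivity in $\mathsf{LSG}-R$, lift $f$ to an $R$-linear $g$ using $T$-injectivity in $\mathsf{Mod}-R$, and then verify homogeneity of $g$ by pushing $g(m)$ into $M'$ with elements of $R_{\ge n}^t\cap R'$ and invoking the definition of the semi-graduation on $Q(M)$. The only addition is your explicit remark on uniqueness via torsionfreeness, which the paper leaves implicit in the equivalence ``$T$-closed $\Leftrightarrow$ $T$-torsionfree and $T$-injective''.
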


Since in general we do not know if the canonical functor $\pi:\mathsf{SGR}-R\rightarrow \mathsf{SGR}-R/\mathsf{TOR}-R$ has a right adjoint, from now on we consider a schematic ring $R$ and the full subcategory $\mathsf{LSG}-R$. 

Since $\mathsf{LSG}-R$ is a subcategory of $\mathsf{SGR}-R$ closed for subobjects and quotients, then $\tau := \mathsf{TOR}-R \cap \mathsf{LSG}-R$ is a Serre subcategory of $\mathsf{LSG}-R$. As a matter of fact, due to \cite[Proposition 4, Section 17]{Gabriel1962}, the canonical functor $\pi:\mathsf{LSG}-R\rightarrow \mathsf{LSG}-R/\tau$ has a right adjoint, say $\omega$. On the other hand, by \cite[Corollary 4, Section 17; Proposition 13(a), Section 17]{Gabriel1962} the category $\mathsf{LSG}-R/\tau$ is equivalent to the full subcategory $\mathcal{C}-R$ of $\mathsf{LSG}-R$ consisting of the $T$-closed modules. Note that we can define the functor
\begin{align*}
  Q:\mathsf{LSG}-R &\ \rightarrow \mathcal{C}-R \\
  M &\ \mapsto Q(M),
\end{align*}

and in fact, $Q = \omega\pi$. Without loss of generality, from now on we assume that $Q(M)$ is an extension of $M/T(M)$ and $\Phi_M$ is the inclusion map.

Having in mind that our objects of interest are schematic rings, we need to guarantee that if $J$ is a $\mathsf{SG}$ ideal of a schematic ring $R$ then $R/J$ is also schematic.  We say that $J$ is a {\em compatible} ideal of $R$ if $R'/J= \left(R/J\right)'$. The following result shows the importance of this condition.

\begin{proposition}
If $R$ is a schematic ring and $J$ is a compatible $\mathsf{SG}$ ideal of $R$, then $R/J$ is also schematic.
\begin{proof}
It is easy to see that if $S$ is a good Ore set of $R$, then $S/J$ is also a multiplicative set that satisfies Ore's condition. Now, by using that $R$ is left Noetherian, then $R/J$ also is, and it is well-known that $S/J$ is an Ore set of $R/J$. Since $J$ is compatible, we get $S/J$ is a good Ore set.

On the other hand, for every $n\in\mathbb{N}$, it can be seen that $R_{\ge n}/J=\left(R/J\right)_{\ge n}$, which guarantees that for $S_1, \dotsc, S_k$ good Ore sets of $R$ satisfying the schematic condition, then $S_1/J,\dots,S_k/J$ are good Ore sets of $R/J$ satisfying this condition. 
    \end{proof}
\end{proposition}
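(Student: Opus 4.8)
The plan is to verify directly the three requirements of Definition~\ref{def.schematic} for the quotient $\overline{R}:=R/J$, writing $\pi\colon R\to\overline{R}$ for the canonical homogeneous surjection. The first two are immediate: since $J$ is an $\mathsf{SG}$ ideal, $\overline{R}$ is an $\mathsf{SG}$ ring with $\overline{R}_n=(R_n+J)/J$, positively semi-graded because $R$ is, and $\overline{R}$ is left Noetherian as a quotient of the left Noetherian ring $R$. The whole content therefore lies in exhibiting a finite family of non-trivial good left Ore sets of $\overline{R}$ satisfying the covering condition, and the natural candidate is $\{\pi(S_1),\dots,\pi(S_k)\}$, where $S_1,\dots,S_k$ is a family of non-trivial good left Ore sets of $R$ witnessing that $R$ is schematic.

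First I would check that each $\pi(S_i)$ is a non-trivial good left Ore set of $\overline{R}$. Multiplicative closedness, the presence of $\overline{1}$, and the left Ore condition transfer to $\pi(S_i)$ by a routine diagram chase: lift to $R$ the element of $\overline{R}$ occurring in the Ore condition, solve the condition in $R$, and push the resulting identity forward along $\pi$. Non-triviality holds because, being an element of $R''\supseteq S_i$, any $s\in S_i\cap R_+$ is homogeneous of positive degree, so $\pi(s)\in\overline{R}_+$. For goodness, condition~(i) follows from the inclusion $\pi(R'')\subseteq\overline{R}''$: given $r\in R''_n$ and a homogeneous $\overline{h}\in\overline{R}_m$, lift $\overline{h}$ to some $h\in R_m$ (possible since $\overline{R}_m=(R_m+J)/J$), and deduce $\pi(r)\overline{h},\ \overline{h}\,\pi(r)\in\overline{R}_{n+m}$ from $rh,hr\in R_{n+m}$; hence $\pi(S_i)\subseteq\pi(R'')\subseteq\overline{R}''$. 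Condition~(ii) is precisely where compatibility enters: given $\overline{s}=\pi(s)$ with $s\in S_i$ and $\overline{r}\in\overline{R}'$, compatibility of $J$ lets us write $\overline{r}=\pi(r)$ with $r\in R'$; goodness of $S_i$ in $R$ then supplies $u\in R'$ and $v\in S_i$ with $us=vr$, and applying $\pi$ gives $\pi(u)\,\overline{s}=\pi(v)\,\overline{r}$ with $\pi(u)\in\pi(R')\subseteq\overline{R}'$ and $\pi(v)\in\pi(S_i)$.

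For the covering condition I would first prove the identity $\pi(R_{\ge t})=\overline{R}_{\ge t}$ for every $t\in\mathbb{N}$. The inclusion $\overline{R}_{\ge t}\subseteq\pi(R_{\ge t})$ holds because $\pi(R_{\ge t})$ is a two-sided ideal of $\overline{R}$ that is an $\mathsf{SG}$ submodule (image of an $\mathsf{SG}$ submodule under a homogeneous map, cf.\ Proposition~\ref{PropositionstartingSG}) and contains $\bigoplus_{k\ge t}\overline{R}_k$; the reverse inclusion holds because $\pi^{-1}(\overline{R}_{\ge t})$ is a two-sided ideal of $R$ that is an $\mathsf{SG}$ submodule and contains $\bigoplus_{k\ge t}R_k$, hence contains $R_{\ge t}$. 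Granting this, take any $(\overline{x}_i)_i\in\prod_i\pi(S_i)$, lift each $\overline{x}_i$ to some $x_i\in S_i$, and apply schematicness of $R$ to the tuple $(x_i)_i$ to obtain $t,m\in\mathbb{N}$ with $(R_{\ge t})^m\subseteq\sum_i Rx_i$; applying $\pi$ and using $\pi\bigl((R_{\ge t})^m\bigr)=\bigl(\pi(R_{\ge t})\bigr)^m=(\overline{R}_{\ge t})^m$ together with $\pi(Rx_i)=\overline{R}\,\overline{x}_i$ yields $(\overline{R}_{\ge t})^m\subseteq\sum_i\overline{R}\,\overline{x}_i$, which is exactly the schematic covering condition for $\overline{R}$.

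I expect the main obstacle to be condition~(ii) of goodness for $\pi(S_i)$: a priori an element $\overline{r}\in\overline{R}'$ need not lift to an element of $R'$ --- lifting it merely as a homogeneous element of $R$ is not enough, since products in $R$ may acquire higher-degree terms that become invisible only modulo $J$ --- and then the element $u$ provided by goodness of $S_i$ in $R$ would be of no use downstairs. Closing exactly this gap is the role of the hypothesis $R'/J=(R/J)'$. A minor point to be dispatched along the way is that $\pi$ may carry some elements of $S_i$ into $J$; this causes no harm to the covering condition, since the inclusion $(R_{\ge t})^m\subseteq\sum_i Rx_i$ already holds in $R$ and descends verbatim regardless of which images $\overline{x}_i$ vanish.
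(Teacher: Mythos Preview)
Your argument is correct and follows essentially the same route as the paper: push the witnessing family $S_1,\dots,S_k$ through $\pi$, verify that each $\pi(S_i)$ is a non-trivial good left Ore set of $\overline R$, establish $\pi(R_{\ge t})=\overline R_{\ge t}$, and transfer the covering inclusion. You supply considerably more detail than the paper's sketch, and you correctly isolate compatibility as exactly what is needed to lift an element of $\overline R{}'$ to $R'$ in the verification of goodness~(ii). One small point worth making explicit: your lifting argument yields only the left Ore \emph{condition} for $\pi(S_i)$; left reversibility (if $\overline r\,\overline s=0$ then some $\overline t\in\pi(S_i)$ kills $\overline r$) does not transfer by the same chase, since $rs\in J$ gives nothing to work with in $R$. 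The paper closes this gap by invoking the standard fact that in a left Noetherian ring a multiplicatively closed set satisfying the left Ore condition is automatically left reversible, which applies since you have already noted that $\overline R$ is left Noetherian.
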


Notice that if $M$ is an $\mathsf{LSG}-R/J$ module, then $f_*(M)$ is an $\mathsf{LSG}-R$ module. If $J$ is a compatible ideal of $R$, then for each $\mathsf{LSG}-R$ module $M$, we get that $f^*(M)$ and $f^!(M)$ are $\mathsf{LSG}-R/J$ modules.

\begin{proposition}
    If $E$ is an object in $\mathcal{C}-R$, then $f^!(E)$ is an object in $\mathcal{C}-R/J$.
\end{proposition}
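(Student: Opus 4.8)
The goal is to show that if $E \in \mathcal{C}\text{-}R$ (that is, $E$ is $T$-closed in $\mathsf{LSG}\text{-}R$), then $f^!(E)$ is $T$-closed in $\mathsf{LSG}\text{-}R/J$. Recalling that $T$-closedness is equivalent to being simultaneously $T$-torsionfree and $T$-injective, the plan is to verify these two properties separately for $f^!(E)$ inside $\mathsf{LSG}\text{-}R/J$, keeping in mind that the torsion theory on $R/J$ is built from the good Ore sets $S_i/J$ while that on $R$ comes from the $S_i$, and that $R_{\ge t}/J = (R/J)_{\ge t}$ as observed in the excerpt.

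For $T$-torsionfreeness: suppose $m \in f^!(E)$ is a $T$-torsion element of the $R/J$-module $f^!(E)$, so that $(R/J)_{\ge t}^n\, m = 0$ for some $n,t \ge 0$. Since $(R/J)_{\ge t} = R_{\ge t}/J$ and the action is $\overline{r}\,m = rm$ by the definition of the $R/J$-module structure on $f^!(E)$ (Proposition \ref{Propoprel1}), this says $R_{\ge t}^n\, m = 0$, so $m$ is a $T$-torsion element of $E$ regarded as an $R$-module. Because $E \in \mathcal{C}\text{-}R$ is in particular $T$-torsionfree, $m = 0$. Hence $f^!(E)$ is $T$-torsionfree in $\mathsf{LSG}\text{-}R/J$.

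For $T$-injectivity: let $N \in \mathsf{LSG}\text{-}R/J$, let $N'$ be an $\mathsf{SG}$ submodule with $N/N'$ of $T$-torsion (over $R/J$), and let $g : N' \to f^!(E)$ be a homogeneous $R/J$-homomorphism. Applying $f_*$, we get a homogeneous $R$-homomorphism $f_*(g) : f_*(N') \to f_*(f^!(E)) \subseteq E$, and $f_*(N)/f_*(N')$ is $T$-torsion over $R$ (again using $R_{\ge t}/J = (R/J)_{\ge t}$, so that a $T$-torsion quotient over $R/J$ pulls back to a $T$-torsion quotient over $R$). Since $E$ is $T$-injective in $\mathsf{LSG}\text{-}R$, the map $f_*(g)$ extends to a homogeneous $R$-homomorphism $\widetilde{g} : f_*(N) \to E$. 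The extension $\widetilde{g}$ is automatically an $R/J$-homomorphism: for $j \in J$ and $x \in N$, $j\widetilde{g}(x) = \widetilde{g}(jx)$, and since $JN \subseteq N'$ would not be automatic, one instead argues directly that $\widetilde{g}(N) \subseteq f^!(E)$ — i.e., $J\,\widetilde{g}(N) = 0$ — by noting that $J\widetilde g(N)$ is a $T$-torsion submodule of the $T$-torsionfree module $E$: indeed $N/N'$ is $T$-torsion, $\widetilde g$ agrees with $g$ on $N'$ whose image lands in $f^!(E)$ (so is killed by $J$), and for $x \in N$ there is $t,n$ with $R_{\ge t}^n x \subseteq N'$, forcing $R_{\ge t}^n (J\widetilde g(x)) = J\widetilde g(R_{\ge t}^n x) = 0$. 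Thus $\widetilde g$ factors as a homogeneous $R/J$-homomorphism $N \to f^!(E)$ extending $g$, which gives $T$-injectivity of $f^!(E)$; uniqueness of the extension follows from $T$-torsionfreeness (two extensions differ by a map $N \to f^!(E)$ vanishing on $N'$, hence with $T$-torsion, hence zero, image). Combining, $f^!(E)$ is $T$-closed in $\mathsf{LSG}\text{-}R/J$, i.e., an object of $\mathcal{C}\text{-}R/J$.

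The main obstacle I expect is the last bookkeeping step — checking that the $R$-linear extension $\widetilde g$ actually takes values in $f^!(E)$ and is $R/J$-linear rather than merely $R$-linear — since this is exactly where the fact that $f^!$ does not interact as cleanly with extensions as $f_*$ does, and where one must exploit both the $T$-torsionfreeness of $E$ and the identity $R_{\ge t}/J = (R/J)_{\ge t}$ to push the annihilation by $J$ through. Everything else is a routine transfer of the $T$-torsionfree/$T$-injective characterization across $f_*$, using that $f_*$ preserves $\mathsf{LSG}$ modules, semi-graduations, and $T$-torsion quotients.
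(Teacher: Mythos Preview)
Your proof is correct and follows essentially the same route as the paper: check $T$-torsionfreeness of $f^!(E)$ as a submodule of the torsionfree $E$, then prove $T$-injectivity by transporting the extension problem along $f_*$ and using that $E$ is $T$-closed in $\mathsf{LSG}\text{-}R$.

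The one place where you diverge from the paper is the ``bookkeeping step'' you flag at the end, and there you are working much harder than necessary. The paper dispatches it functorially: since $N$ is an $R/J$-module, $J$ annihilates all of $f_*(N)$, so $f^!(f_*(N)) = N$, and applying $f^!$ to the extension $\widetilde g : f_*(N) \to E$ immediately yields the desired homogeneous $R/J$-map $f^!(\widetilde g): N \to f^!(E)$. Unwound, this is just the one-line computation $j\,\widetilde g(x) = \widetilde g(jx) = \widetilde g(0) = 0$ for $j \in J$ and $x \in N$. Your torsion argument reaches the same conclusion, but the displayed identity $R_{\ge t}^n(J\widetilde g(x)) = J\widetilde g(R_{\ge t}^n x)$ is not justified as written in a non-commutative ring (it would require commuting $R_{\ge t}^n$ past $J$); it happens to hold here only because both sides are already zero by the simpler observation above. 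So the step is salvageable, but the detour through torsion is unnecessary and the equality you wrote should be replaced by the direct annihilation argument.
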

\begin{proof}
Consider $M$ an object in $\mathsf{LSG}-R/J$, $N$ a subobject of $M$ with $M/N$ of torsion, and $g:N\rightarrow f^!(E)$ a morphism in $\mathsf{LSG}-R/J$. Let $j:f^!(E)\rightarrow E$ be the inclusion. By considering the morphism $j\circ g:f_*(N)\rightarrow E$ and using that $E$ is closed, there exists a homogeneous $R$-homomorphism $h:f_*(M)\rightarrow E$ that extends to $j\circ g$. Besides, it is clear that $f^!(h):M\rightarrow f^!(E)$ extends to $g$. 

Finally, due to $f^!(E)$ is a submodule of $E$, and $E$ is torsion-free, then $f^!(E)$ is torsion-free in $\mathsf{LSG}-R$, and so it is torsion-free in $\mathsf{LSG}-R/J$. 
\end{proof}

We define the morphisms
\[
i_*:\mathcal{C}-R/J\rightarrow \mathcal{C}-R,\quad i^!:\mathcal{C}-R\rightarrow \mathcal{C}-R/J,\quad {\rm and}\quad i^*:\mathcal{C}-R\rightarrow \mathcal{C}-R/J,
\]

as the restriction maps of $Qf_*$, $f^!$, and $Q'f^*$, respectively, where $Q$ is the functor assigning the module of quotients in the category $\mathsf{LSG}-R$ and $Q'$ is the same functor in $\mathsf{LSG}-R/J$.

\begin{lemma}\label{Prop adjoint_and_fully}
 $(i_*,i^!)$ is an adjoint pair and $i_*$ is fully faithful.
\end{lemma}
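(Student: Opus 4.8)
The plan is to reduce the statement about the category $\mathcal{C}$ to the already established adjunction $(f_*, f^!)$ on the level of the categories $\mathsf{LSG}-R$ and $\mathsf{LSG}-R/J$, using the fact that $Q = \omega\pi$ and $Q' = \omega'\pi'$ realize $\mathcal{C}-R$ and $\mathcal{C}-R/J$ as reflective subcategories (the localizations at the respective torsion Serre subcategories). First I would record the key compatibility: if $E$ is $T$-closed in $\mathsf{LSG}-R$, then $f^!(E)$ is already $T$-closed in $\mathsf{LSG}-R/J$ by the preceding proposition, so $i^! = f^!|_{\mathcal{C}-R}$ with no need to apply $Q'$. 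Dually, $f_*$ does not preserve $T$-closedness in general (a submodule annihilated by $J$ need not stay torsion-free as an $R$-module in the naive sense, but more to the point $f_*E$ need not be $T$-injective over $R$), which is exactly why $i_* = (Q f_*)|_{\mathcal{C}-R/J}$ needs the reflector $Q$.

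Next I would verify the adjunction $\mathrm{Hom}_{\mathcal{C}-R}(i_*N, E) \cong \mathrm{Hom}_{\mathcal{C}-R/J}(N, i^!E)$ for $N \in \mathcal{C}-R/J$ and $E \in \mathcal{C}-R$ by composing three bijections: first, since $E$ lies in the reflective subcategory $\mathcal{C}-R$ and $Q = \omega\pi$ is left adjoint to the inclusion, $\mathrm{Hom}_{\mathcal{C}-R}(Qf_*N, E) \cong \mathrm{Hom}_{\mathsf{LSG}-R}(f_*N, E)$; second, by Proposition \ref{Adjointparfirst}, $\mathrm{Hom}_{\mathsf{LSG}-R}(f_*N, E) \cong \mathrm{Hom}_{\mathsf{LSG}-R/J}(N, f^!E)$; third, since $f^!E = i^!E$ already lies in $\mathcal{C}-R/J$, $\mathrm{Hom}_{\mathsf{LSG}-R/J}(N, f^!E) = \mathrm{Hom}_{\mathcal{C}-R/J}(N, i^!E)$. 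Chaining these gives the required natural isomorphism; naturality in both variables follows from naturality of each of the three constituent isomorphisms. One must be slightly careful that the adjunction $(f_*, f^!)$ from Proposition \ref{Adjointparfirst} was stated for $\mathsf{SGR}-R$; I would note that it restricts to $\mathsf{LSG}-R$ because (as remarked in the excerpt) $f_*$ sends $\mathsf{LSG}-R/J$ to $\mathsf{LSG}-R$ and, when $J$ is compatible, $f^!$ sends $\mathsf{LSG}-R$ to $\mathsf{LSG}-R/J$.

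For the fully faithful claim, I would show that the unit $N \to i^! i_* N$ is an isomorphism for every $N \in \mathcal{C}-R/J$. Unwinding, $i^! i_* N = f^!(Q f_* N)$: the largest $\mathsf{SG}$ $R/J$-submodule of $Q(f_*N)$ annihilated by $J$. Since $N$ itself is annihilated by $J$, and $Q(f_*N)$ is an essential (torsion-quotient) extension of $f_*N/T(f_*N)$, and $f_*N$ is already $T$-torsionfree over $R$ (because $N$, being $T$-closed over $R/J$, is torsionfree and compatibility forces the $R$- and $R/J$-torsion theories to agree on it), we have $f_*N \hookrightarrow Q(f_*N)$ with torsion cokernel. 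The submodule of $Q(f_*N)$ killed by $J$ contains $f_*N$; I would argue it equals $f_*N$ by showing that any $\xi \in Q(f_*N)$ with $J\xi = 0$ already lies in $N$: there is $s$ in the relevant power of $R_{\ge n}$ with $s\xi \in f_*N$, and working modulo $J$ this exhibits $\xi$ as a quotient-type element over $R/J$ lying in the $T$-closure of $N$ inside $\mathsf{LSG}-R/J$, which is $N$ itself by $T$-closedness. This is the step I expect to be the main obstacle: one must check carefully that the module of quotients $Q$ computed over $R$, when restricted to the $J$-annihilated part, reproduces exactly $N$ and not something larger, and this rests on the compatibility of $J$ (so that $R'/J = (R/J)'$ and the homogeneity/localization data match) together with $T$-closedness of $N$ over $R/J$; I would isolate it as the core lemma and prove it by a direct element chase using the explicit semi-graduation of $Q(M)$ recalled before Proposition 3.14.
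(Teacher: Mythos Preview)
Your approach is correct and close in spirit to the paper's, but the organization differs in an instructive way.

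For the adjunction, you chain three bijections: the reflector adjunction $\mathrm{Hom}_{\mathcal{C}-R}(Qf_*N,E)\cong\mathrm{Hom}_{\mathsf{LSG}-R}(f_*N,E)$, then $(f_*,f^!)$, then fullness of $\mathcal{C}-R/J\subseteq\mathsf{LSG}-R/J$. The paper does the same thing by hand: given $g:M\to f^!(N)$ it composes with the inclusion $f^!(N)\hookrightarrow N$ and uses $T$-closedness of $N$ to extend uniquely to $Q(M)$, obtaining the map $\gamma_{MN}$. Your formulation is cleaner categorically; the paper's is more explicit but amounts to the same bijection.

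For fully faithfulness the paper takes a shorter route than you do: it simply observes that $f_*$ is fully faithful by definition and that $Q$ is fully faithful on torsion-free objects, hence the composite $i_*=Qf_*$ (restricted to $\mathcal{C}-R/J$, where $f_*N$ is torsion-free) is fully faithful. Your plan via the unit $N\to i^!i_*N=f^!(Q(f_*N))$ also works, but the ``core lemma'' you isolate and propose to prove by an element chase is exactly what the paper proves inside its adjunction argument, and with a much slicker justification than an element chase: one has $N\subseteq f^!(Q(N))$ with $f^!(Q(N))/N$ torsion, so $f^!(Q(N))$ is the module of quotients of $N$ in $\mathsf{LSG}-R/J$; since $N$ is already $T$-closed, $f^!(Q(N))=N$. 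No direct manipulation of the semi-graduation of $Q$ is needed. So your anticipated obstacle dissolves once you phrase it this way, and the fully faithful part can be shortened to a single line.
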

\begin{proof}
    \begin{enumerate}
        \item $(i_*,i^!)$ is an adjoint pair: Let $M$ be an object in $\mathcal{C}-R/J$ and $N$ an object in $\mathcal{C}-R$. If $g:M\rightarrow f^!(N)$ is a homogenous $R/J$-homomorphism and $j: f^!(N)\rightarrow N$ is the inclusion map, then $j\circ g:M\rightarrow N$ is a homogeneous $R$-homomorphism. Since $N$ is $T$-closed, there exists a unique map $\overline{g}:i_*(M)=Q(M)\rightarrow N$ extending $j\circ g$. This fact allows to obtain the map $\gamma_{MN}:{\rm Hom}(M,f^!(N))\rightarrow {\rm Hom}(i_*(M),N)$ defined by $\gamma_{MN}(g)=\overline{g}$. It is clear that $\gamma_{MN}$ is a monomorphism. 

On the other hand, note that $M\subseteq f^!(Q(M))$ and $f^!(Q(M))/M$ is a torsion module, whence $f^!(Q(M))$ is the module of quotients of $M$ (in $\mathsf{LSG}-R/J$), and by using that $M$ is $T$-closed, we get $f^!(Q(M))=M$. In this way, if $h:i_*(M)\rightarrow N$ is a homogeneous $R$-homomorphism, then $h = \gamma_{MN}(f^!(h))$, and so $\gamma_{MN}$ is a bijective map. It is straightforward to see that these maps induce the natural isomorphism that show that $(i_*,i^!)$ is an adjoint pair.
    \item $i_*$ is fully faithful: From the definition of $f_*$ we know that it is fully faithful. It is easy to show that when restricted to free torsion modules, the functor $Q$ is also fully faithful. From these facts we get that $i_*$ is fully faithful
    \end{enumerate}
\end{proof}

With the aim of proving the other adjunction, we require that $J$ satisfies the following additional ($\ast$): for any elements $n, t \in \mathbb{N}$, when $x\in J\cap R_{\ge t}^n$ there exist $n_x,t_x\in\mathbb{N}$ such that $R_{\ge t_x}^{n_x}x\subseteq JR_{\ge t}^n$. Let us just mention that this condition holds trivially in commutative cases. The importance of condition ($\ast$) is shown in the following lemma which is key in the proof of Theorem \ref{Smith2003Theorem3.2Generalization}.

\begin{lemma}\label{lemma}
If $J$ satisfies condition {\rm (}$\ast${\rm )} and $E$ is an object in $\mathcal{C}-R/J$, then $f_*(E)$ is an object in $\mathcal{C}-R$. 
\end{lemma}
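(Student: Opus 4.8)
The goal is to show that $f_*(E)$ is $T$-closed in $\mathsf{LSG}-R$; by the remark recalled above this amounts to verifying that $f_*(E)$ is $T$-torsionfree and $T$-injective. Recall that $f_*(E)$ is already an object of $\mathsf{LSG}-R$ and that (as in the proof that $R/J$ is schematic) $R_{\ge t}/J=(R/J)_{\ge t}$ and $(R_{\ge t}^{\,n}+J)/J=\bigl((R/J)_{\ge t}\bigr)^{n}$ for all $n,t\in\mathbb{N}$. Since the $R$-action on $f_*(E)$ factors through $f\colon R\to R/J$, an element $m$ of $f_*(E)$ satisfies $R_{\ge t}^{\,n}m=0$ if and only if $\bigl((R/J)_{\ge t}\bigr)^{n}m=0$, so $m$ is $T$-torsion over $R$ exactly when it is $T$-torsion over $R/J$; hence $T(f_*(E))=T(E)=0$ because $E$ is $T$-torsionfree in $\mathsf{LSG}-R/J$. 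This settles torsionfreeness.

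For $T$-injectivity, let $M$ be an object of $\mathsf{LSG}-R$, $N$ an $\mathsf{SG}$-submodule with $M/N$ of $T$-torsion, and $g\colon N\to f_*(E)$ a homogeneous $R$-homomorphism. As $E$ is an $R/J$-module, $JN\subseteq\operatorname{Ker}g$, and since $\operatorname{Ker}g$ is an $\mathsf{SG}$-submodule we get $\langle JN\rangle^{\mathsf{SG}}\subseteq\operatorname{Ker}g$; thus $g$ factors through a morphism $\bar g\colon f^*(N)\to E$ of $\mathsf{LSG}-R/J$. Write $\pi_M\colon M\to f^*(M)$ for the canonical homogeneous $R$-epimorphism (here $f^*(M)$ and $f^*(N)$ lie in $\mathsf{LSG}-R/J$ by compatibility of $J$), set $N':=\pi_M(N)$, and note that $f^*(M)/N'\cong M/(N+\langle JM\rangle^{\mathsf{SG}})$ is a quotient of $M/N$, hence $T$-torsion. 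The plan is: once we know that $g$ vanishes on $N\cap\langle JM\rangle^{\mathsf{SG}}$ — equivalently, that $\bar g$ annihilates the kernel of the natural map $f^*(N)\to f^*(M)$ — then $\bar g$ descends to $g'\colon N'\to E$ in $\mathsf{LSG}-R/J$; by $T$-injectivity of $E$ in $\mathsf{LSG}-R/J$, $g'$ extends to $h\colon f^*(M)\to E$, and $f_*(h)\circ\pi_M\colon M\to f_*(E)$ is a homogeneous $R$-homomorphism extending $g$, its uniqueness being automatic from torsionfreeness.

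Thus everything comes down to showing that $g$ kills $N\cap\langle JM\rangle^{\mathsf{SG}}$. Since $f_*(E)$ is $T$-torsionfree, it suffices, for each $w\in N\cap\langle JM\rangle^{\mathsf{SG}}$, to produce $s,r\in\mathbb{N}$ with $R_{\ge s}^{\,r}w\subseteq\operatorname{Ker}g$, and because $\langle JN\rangle^{\mathsf{SG}}\subseteq\operatorname{Ker}g$ it is enough to find such $s,r$ with $R_{\ge s}^{\,r}w\subseteq JN$. I would deduce this from two claims: (a) $\langle JM\rangle^{\mathsf{SG}}/JM$ is $T$-torsion, and (b) $(N\cap JM)/JN$ is $T$-torsion. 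Granting them: given $w\in N\cap\langle JM\rangle^{\mathsf{SG}}$, (a) gives $s_1,r_1$ with $R_{\ge s_1}^{\,r_1}w\subseteq JM$, whence $R_{\ge s_1}^{\,r_1}w\subseteq N\cap JM$ as $w\in N$; writing the two-sided ideal $R_{\ge s_1}^{\,r_1}=\sum_iR\rho_i$ with finitely many $\rho_i\in R_{\ge s_1}^{\,r_1}$ ($R$ being left Noetherian), each $\rho_iw$ lies in $N\cap JM$, so by (b) there are $s_2,r_2$ with $R_{\ge s_2}^{\,r_2}(\rho_iw)\subseteq JN$ for all $i$; using that $R_{\ge c}^{\,d}\subseteq R_{\ge s_2}^{\,r_2}R_{\ge s_1}^{\,r_1}$ for $c=\max\{s_1,s_2\}$, $d=r_1+r_2$, and that $R_{\ge s_2}^{\,r_2}$ is a right ideal, we obtain $R_{\ge c}^{\,d}w\subseteq JN$, as required. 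Claim (b) is comparatively soft: $(N\cap JM)/JN$ is the kernel of the natural map $N/JN\to M/JM$, hence a quotient of $\operatorname{Tor}_1^{R}(R/J,M/N)$ (from the long exact sequence obtained by applying $R/J\otimes_R-$ to $0\to N\to M\to M/N\to 0$), and localizing at any of the good left Ore sets defining $T$ kills $M/N$ and therefore, by exactness of the localization, kills this $\operatorname{Tor}$; so it is $T$-torsion and so is its quotient.

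The main obstacle is claim (a): that $\mathsf{SG}$-saturating $JM$ introduces only $T$-torsion. Unwinding the description of $\langle JM\rangle^{\mathsf{SG}}$ and using that $J$ is $\mathsf{SG}$, (a) reduces to the following: for homogeneous $j\in J\cap R_a$, $m\in M_b$ and every degree $k$, some filtration ideal $R_{\ge s}^{\,r}$ carries the homogeneous component $(jm)_k$ back into $JM$ — the sole obstruction being precisely the failure of $JM$ to be $\mathsf{SG}$. This is exactly the situation that condition $(\ast)$ is tailored to handle: it ensures that elements of $J$ meeting the filtration ideals $R_{\ge s}^{\,r}$ can, after multiplication by a further power $R_{\ge s_x}^{\,r_x}$, be pushed into the products $JR_{\ge s}^{\,r}$, so that the correction terms produced by the semi-graded multiplication get absorbed into $JM$. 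In the commutative (or connected $\mathbb{N}$-graded, generated in degree one) case $JM$ is already $\mathsf{SG}$, making (a) vacuous and $(\ast)$ automatic, consistent with the remark that $(\ast)$ is trivial there. I expect the careful bookkeeping in (a) — tracking all homogeneous components produced by the semi-graded action and invoking $(\ast)$ uniformly over the finitely many summands — to be the technical heart of the proof.
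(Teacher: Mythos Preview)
Your reduction to claims (a) and (b) is natural, and (b) is fine via the Tor argument. The genuine gap is (a): you assert that $(\ast)$ is ``tailored to handle'' the statement that $\langle JM\rangle^{\mathsf{SG}}/JM$ is $T$-torsion for an \emph{arbitrary} $\mathsf{LSG}$ module $M$, but you never carry this out, and I do not see how it can be done. Condition~$(\ast)$ speaks only about elements $x\in J\cap R_{\ge t}^{\,n}$ inside the ring; your obstruction lives in $M$: you need $R_{\ge s}^{\,r}(jm)_k\subseteq JM$ for a homogeneous component $(jm)_k\in M_k$. Multiplying on the left by $s\in R''_p$ sends $(jm)_k$ to the degree-$(p+k)$ component of $(sj)m$ with $sj\in J_{p+a}$ again homogeneous, so the problem reproduces itself at higher degree and never terminates. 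No mechanism is offered by which the ring-level hypothesis $(\ast)$ controls the semi-graded saturation of $JM$ for general $M$, and your closing paragraph is a hope, not an argument.

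The paper sidesteps this entirely. It first observes that an $\mathsf{LSG}$ module is $T$-closed in $\mathsf{LSG}\text{-}R$ iff it is $T$-closed in $\mathsf{Mod}\text{-}R$, and then invokes Goldman's injectivity criterion \cite[Proposition~3.2]{Goldman}: to check $T$-injectivity of $f_*(E)$ it suffices to extend every $R$-map $g\colon I\to E$ along $I\hookrightarrow R$, where $I=R_{\ge t}^{\,n}$ runs over the filter. One forms $g^*\colon I/JI\to E$ and the canonical $j\colon I/JI\to R/J$; $\mathrm{Coker}(j)$ is visibly torsion, and $\mathrm{Ker}(j)=(J\cap I)/JI$ is torsion \emph{literally by} $(\ast)$. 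Since $E$ is closed over $R/J$, $g^*$ factors through $j$, and composing with $R\to R/J$ gives the required extension to $R$. Thus Goldman's criterion is exactly the device that replaces your arbitrary pair $(M,N)$ by the single pair $(R,R_{\ge t}^{\,n})$, where $(\ast)$ applies on the nose; your route tries to bypass that reduction and lands on a claim strictly stronger than, and not visibly implied by, $(\ast)$.
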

\begin{proof}
Recall that the construction of the module of quotients in $\mathsf{LSG}-R$ is the same as in the category $\mathsf{Mod}-R$, so it follows that a semi-graded $R$-module $M$ is closed in $\mathsf{LSG}-R$ if and only if is closed in $\mathsf{Mod}-R$. In this way, we only have to show that $E$ is closed in $\mathsf{Mod}-R$.

Consider elements $n, t \in\mathbb{N},\ I = R_{\ge t}^n$ and $g:I\rightarrow E$ an $R$-homomorphism. We define the maps 
\begin{align*}
    g^* &\ :I/JI\rightarrow E, \quad \overline{x} \mapsto g(x) \\ 
    j &\ :I/JI\rightarrow R/J, \quad \overline{x} \mapsto x.
\end{align*}

Since $E$ is an $R/J$-module, $g^*$ is well-defined. It is clear that $g$ and $j$ are $R/J$-homomorphisms. Note that ${\rm Coker}(j)$ is torsion, and by using that $J$ satisfies ($\ast$), we get that ${\rm Ker}(j)$ is torsion. As $E$ is closed in $\mathsf{Mod}-R/J$ there exist an $R/J$-homomorphism $h : R/J\rightarrow E$ with $h\circ j = g^*$. If we take the canonical map $\theta: R\rightarrow R/J$, then the $R$-homomorphism $h\circ \theta:R\rightarrow E$ extends $g$, and by \cite[Proposition 3.2]{Goldman} $E$ is $T$-injective in $\mathsf{Mod}-R$. Besides, since $E$ is $T$-torsionfree in $\mathsf{Mod}-R$ (due to that $E$ is closed in $\mathsf{Mod}-R/J$), it follows that $E$ is $T$-closed in $\mathsf{Mod}-R$.
\end{proof}

Finally, we get the most important result of the paper that extends partially Smith's result \cite[Theorem 3.2]{Smith2003}, \cite[Theorem 1.2]{Smith2016}.

\begin{theorem}\label{Smith2003Theorem3.2Generalization}
Let $R$ be a schematic $\mathsf{SG}$ ring and $J$ a compatible $\mathsf{SG}$ ideal of $R$ satisfying {\rm (}$\ast${\rm )}. Then the map $i:\mathcal{C}-R/J\rightarrow \mathcal{C}-R$ given by the functors $(i^*,i_*,i^!)$ is a closed immersion.
\end{theorem}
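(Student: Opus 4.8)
The plan is to check, in order, the three ingredients in the definition of a closed immersion: that $i$ is a $\mathsf{map}$ (i.e.\ $(i^*,i_*)$ is an adjoint pair with $i^*$ left adjoint to $i_*$), that $i$ is $\mathsf{affine}$ (i.e.\ $i_*$ is faithful and admits a right adjoint), and that the essential image of $\mathcal{C}-R/J$ in $\mathcal{C}-R$ under $i_*$ is closed under subobjects and quotients. Lemma \ref{Prop adjoint_and_fully} already provides the right adjoint $i^!$ of $i_*$ and the full faithfulness (hence faithfulness) of $i_*$, so the affineness assertion reduces to showing that $i$ is a map; thus the genuinely new work is the adjunction $(i^*,i_*)$ and the closure of the essential image.

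For the adjunction $(i^*,i_*)$, the idea is to compose the adjunction $(f^*,f_*)$ of Proposition \ref{Adjointparsecond} with the reflection onto $T$-closed modules, using condition $(\ast)$ to collapse the module-of-quotients functor on the $\mathcal{C}-R/J$ side. Concretely, Lemma \ref{lemma} shows that for $N$ in $\mathcal{C}-R/J$ the module $f_*(N)$ is already $T$-closed over $R$, so $i_*(N)\cong f_*(N)$. Then for $M$ in $\mathcal{C}-R$ and $N$ in $\mathcal{C}-R/J$ I would argue that $\mathrm{Hom}_{\mathcal{C}-R/J}(i^*(M),N)=\mathrm{Hom}_{\mathcal{C}-R/J}(Q'f^*(M),N)$ is isomorphic, by the universal property of the localization $Q'$ together with $N$ being $T$-closed, to $\mathrm{Hom}_{\mathsf{LSG}-R/J}(f^*(M),N)$, hence by Proposition \ref{Adjointparsecond} to $\mathrm{Hom}_{\mathsf{LSG}-R}(M,f_*(N))$, hence (since $f_*(N)=i_*(N)$ and $M$ is $T$-closed) to $\mathrm{Hom}_{\mathcal{C}-R}(M,i_*(N))$; naturality in $M$ and $N$ is routine. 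Thus $i$ is a map, and with Lemma \ref{Prop adjoint_and_fully} it is affine.

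For the essential image, the plan is to identify it with $\{E\in\mathcal{C}-R : JE=0\}$. One inclusion is immediate from $i_*(N)=f_*(N)$, since $N$ is an $R/J$-module. For the converse, if $E\in\mathcal{C}-R$ with $JE=0$, then $E$ is an $\mathsf{LSG}$ module over $R/J$ (using that $J$ is compatible, as in the remark where $f^*$ and $f^!$ were shown to land in $\mathsf{LSG}-R/J$); it is $T$-torsionfree over $R/J$ because the two torsion theories agree on $J$-annihilated modules, thanks to the equality $R_{\ge n}/J=(R/J)_{\ge n}$ established in the proof that $R/J$ is schematic; and it is $T$-injective over $R/J$ because any $R$-linear extension into $E$ is automatically $R/J$-linear once $J$ kills $E$. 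Hence $E\in\mathcal{C}-R/J$ and $i_*(E)=E$. Granting this description: if $E=i_*(N)$ and $E'$ is a subobject of $E$ in $\mathcal{C}-R$, then (limits in the reflective subcategory $\mathcal{C}-R$ being computed as in $\mathsf{LSG}-R$) $E'$ is an honest $\mathsf{SG}$-submodule of $E$, so $JE'\subseteq JE=0$ and $E'$ lies in the essential image; and if $E''$ is a quotient of $E$ in $\mathcal{C}-R$, then $E''\cong Q(E/E_0)$ for a submodule $E_0$ of $E$, the module $E/E_0$ is killed by $J$, and since the torsion theories agree its $R$-module of quotients coincides with its $R/J$-module of quotients, which is again $J$-annihilated, so $JE''=0$ and $E''$ lies in the essential image. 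Combining the three points shows that $i$ is a closed immersion.

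The step I expect to be the main obstacle is the comparison of the two torsion theories: the claim that a $J$-annihilated $R$-module is $T$-closed over $R$ if and only if it is $T$-closed over $R/J$, and more generally that its $R$- and $R/J$-modules of quotients coincide, since this is precisely what makes $J$-annihilation propagate through subobjects and quotients in the quotient category $\mathcal{C}-R$. This rests on the compatibility of $J$ and the identity $R_{\ge n}/J=(R/J)_{\ge n}$, and it is also where condition $(\ast)$ enters — through Lemma \ref{lemma} — to guarantee $i_*(N)=f_*(N)$ in the first place, without which $Q f_*(N)$ could acquire elements not killed by $J$. The remaining checks — naturality of the adjunction isomorphisms, and the description of subobjects and quotients in $\mathcal{C}-R$ as honest kernels and $Q$ of honest cokernels — are routine consequences of the Gabriel localization machinery already invoked in the excerpt.
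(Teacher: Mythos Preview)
Your proposal is correct and follows essentially the same route as the paper: invoke Lemma~\ref{Prop adjoint_and_fully} for the pair $(i_*,i^!)$ and faithfulness, use Lemma~\ref{lemma} to identify $i_*$ with $f_*$ and then compose the adjunction of Proposition~\ref{Adjointparsecond} with $Q'$ to obtain $(i^*,i_*)$, and finally characterize the essential image as the $J$-annihilated closed modules (equivalently, those with $f^!(M)=M$) together with the identity $Q(M/N)=Q'(M/N)$ to handle quotients. If anything, you are more explicit than the paper about the comparison of the $R$- and $R/J$-torsion theories on $J$-annihilated modules, which the paper simply asserts; your flagging of this as the crux is apt.
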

\begin{proof}
By Lemma \ref{Prop adjoint_and_fully}, we only have to show that $(i^*,i_*)$ is an adjoint pair and that the essential image of $i_*$ is closed for subobjects and quotients.

Let $M$ be an object in $\mathcal{C}-R$ and $N$ an object in $\mathcal{C}-R/J$. Due to Lemma \ref{lemma}, we get $(i_*=f_*)$, and hence the morphism $v_{MN}:{\rm Hom}(M,i_*(N))\rightarrow {\rm Hom}(i^*(M),N)$ defined by $v_{MN}=Q'\circ\lambda_{MN}$, where $\lambda_{MN}$ is the map defined in \ref{Adjointparsecond}, induces the required natural isomorphism.

By last, note that in the category $\mathcal{C}-R$ the subobjects of an object $M$ are the closed $\mathsf{SG}$-submodules of $M$, and if $N$ is a subobject of $M$ then the quotient is given by $Q(M/N)$. Besides, since ${\rm Im}(i_*)$ is the subcategory consisting of the $\mathsf{SG}$-$R$-modules $M$ such that $f^!(M)=M$, and in this case we have $Q(M/N)=Q'(M/N)$, it follows that ${\rm Im}(i_*)$ is closed under quotients  and submodules.
\end{proof}

We get immediately Smith's result in the schematic graded case formulated by Van Oystaeyen and Willaert \cite{VanOystaeyenWillaert1995}.

\begin{corollary}[{\cite[Theorem 3.2]{Smith2003}; \cite[Theorem 1.2]{Smith2016}}]\label{Smith2003Theorem3.2}
Let $J$ be a graded ideal in an $\mathbb{N}$-graded schematic $\Bbbk$-algebra $A$ satisfying ($\ast$). Then the homomorphism $A \to A / J$ induces a closed immersion $i: {\rm Proj}_{nc} A / J \to {\rm Proj}_{nc} A$.
\end{corollary}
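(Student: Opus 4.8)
The final statement (Theorem~\ref{Smith2003Theorem3.2Generalization}) is essentially an assembly result: all the hard analytic content has already been isolated in the preceding lemmas, so the proof should consist of checking that the pieces fit together in the definition of a closed immersion. Recall that a closed immersion $i:\mathcal{C}\text{-}R/J\to\mathcal{C}\text{-}R$ is (in Smith's sense, transported to our setting) a triple of functors $(i^*,i_*,i^!)$ such that $(i^*,i_*)$ and $(i_*,i^!)$ are adjoint pairs, $i_*$ is fully faithful, and the essential image of $i_*$ is closed under subobjects and quotients in $\mathcal{C}\text{-}R$. Lemma~\ref{Prop adjoint_and_fully} already supplies the adjunction $(i_*,i^!)$ and the full faithfulness of $i_*$. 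So the plan is: first dispose of the adjunction $(i^*,i_*)$ using Lemma~\ref{lemma}, and then verify the closure condition on the essential image.

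\textbf{Step 1: the adjunction $(i^*,i_*)$.} The key simplification is Lemma~\ref{lemma}: since $J$ satisfies $(\ast)$, for any $E\in\mathcal{C}\text{-}R/J$ the module $f_*(E)$ is already $T$-closed in $\mathsf{LSG}\text{-}R$, so $i_*$ coincides with $f_*$ on objects (no composition with $Q$ is needed). I would then start from the adjunction $(f^*,f_*)$ of Proposition~\ref{Adjointparsecond}, which gives, for $M\in\mathsf{LSG}\text{-}R$ and $N\in\mathcal{C}\text{-}R/J\subseteq\mathsf{LSG}\text{-}R/J$, a natural isomorphism $\lambda_{MN}:{\rm Hom}(M,f_*(N))\to{\rm Hom}(f^*(M),N)$. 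Post-composing with the module-of-quotients functor $Q'$ in $\mathsf{LSG}\text{-}R/J$ turns a map out of $f^*(M)$ into a map out of $Q'f^*(M)=i^*(M)$; since $N$ is $T$-closed, the universal property of $Q'$ (namely, ${\rm Hom}(f^*(M),N)\cong{\rm Hom}(Q'f^*(M),N)$, using that $f^*(M)/T(f^*(M))\hookrightarrow Q'f^*(M)$ with torsion cokernel and $N$ is $T$-injective and $T$-torsionfree) makes $v_{MN}:=Q'\circ\lambda_{MN}$ a bijection. Naturality in both variables is inherited from that of $\lambda_{MN}$ and of the universal arrow $\Phi$, so $(i^*,i_*)$ is an adjoint pair.

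\textbf{Step 2: closure of the essential image.} Here I would use that in $\mathcal{C}\text{-}R$ the subobjects of an object $M$ are exactly the $T$-closed $\mathsf{SG}$-submodules, and the quotient of $M$ by a subobject $N$ is $Q(M/N)$ (this is the standard description of subobjects/quotients in the $T$-closed subcategory, from Gabriel's theory as invoked in the excerpt). The essential image of $i_*$ is, up to isomorphism, the class of $M\in\mathcal{C}\text{-}R$ with $f^!(M)=M$, i.e.\ the $T$-closed modules annihilated by $J$. If $M$ is annihilated by $J$ and $N\subseteq M$ is a subobject, then $N$ is annihilated by $J$, and $M/N$ is an $R/J$-module; being annihilated by $J$, its torsion over $R$ and over $R/J$ coincide, so $Q(M/N)=Q'(M/N)$, which is annihilated by $J$ and $T$-closed — hence again in the essential image. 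Likewise any $T$-closed submodule $N$ of such an $M$ is annihilated by $J$, so it lies in the essential image. This gives closure under quotients and subobjects.

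\textbf{The main obstacle.} The genuinely delicate point is Step~1, specifically the passage ${\rm Hom}(f^*(M),N)\cong{\rm Hom}(Q'f^*(M),N)$: one must be sure that $f^*(M)=M/\langle JM\rangle^{\mathsf{SG}}$ has the expected relationship to its module of quotients in $\mathsf{LSG}\text{-}R/J$, that $\Phi_{f^*(M)}$ behaves well under the adjunction isomorphisms, and that the resulting $v_{MN}$ is natural on the nose and not merely objectwise bijective. Everything else — Step~2, and the bookkeeping that $i_*=f_*$ — is formal once Lemma~\ref{lemma} and the description of subobjects in $\mathcal{C}\text{-}R$ are in hand. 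I would therefore spend most of the write-up carefully justifying that $v_{MN}=Q'\circ\lambda_{MN}$ is a natural isomorphism, and treat the closure statement of Step~2 more briefly.
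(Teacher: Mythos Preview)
You have written a proof of Theorem~\ref{Smith2003Theorem3.2Generalization}, not of the Corollary that was asked. Your sketch of the Theorem's proof is correct and matches the paper's argument for that result essentially line by line: the adjunction $(i^*,i_*)$ is obtained from Lemma~\ref{lemma} (giving $i_*=f_*$) together with $v_{MN}=Q'\circ\lambda_{MN}$, and closure of the essential image is argued via the description of subobjects and quotients in $\mathcal{C}\text{-}R$ and the characterization of the essential image as those $M$ with $f^!(M)=M$, so that $Q(M/N)=Q'(M/N)$.

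But the statement at hand is the \emph{Corollary}, and what it actually requires is the bridge from the semi-graded framework back to Smith's ${\rm Proj}_{nc}$. The paper's proof of the Corollary consists of two short observations: (i) since $A$ is $\mathbb{N}$-graded, the category $\mathsf{LSG}\text{-}A$ coincides with $\mathsf{GrMod}\text{-}A$, so that $\mathcal{C}\text{-}A$ is the usual ${\rm Proj}_{nc} A$; and (ii) every graded ideal of an $\mathbb{N}$-graded ring is automatically $\mathsf{SG}$-compatible. With these two facts in place, Theorem~\ref{Smith2003Theorem3.2Generalization} applies directly and yields the closed immersion. Your write-up never addresses either point, so as a proof of the Corollary it is incomplete: even granting your (correct) reproduction of the Theorem's proof, you still need these identifications to deduce the Corollary.
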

\begin{proof}
Since $A$ is $\mathbb{N}$-graded, then the category $\mathsf{GrMod}-A$ coincides with the category $\mathsf{LSG}-A$. Finally, it is clear that in the graded case, the graded ideals are $\mathsf{SG}$ compatible.
\end{proof}

\section{Acknowledgments}

We would like to express our sincere thanks to Professor Jason Gaddis for introducing us to this project.

\section{Conclusions and future work}\label{conclusionsfuturework}

For $R$ a schematic $\mathsf{SG}$ ring and $J$ a compatible $\mathsf{SG}$ ideal of $R$ that satisfies ($\ast$), we have showed that the canonical map $R\rightarrow R/J$ induces a categorical map $\mathcal{C}-R/J\rightarrow \mathcal{C}-R$. Since in the $\mathbb{N}$-graded setting the categories $\mathsf{LSG}-R$ and $\mathsf{gr} R$ coincide, it follows that the category $\mathcal{C}-R$ is equivalent to the category ${\rm Proj}_{nc}(R)$ defined by Smith, Theorem \ref{Smith2003Theorem3.2Generalization} presents a partial generalization of Smith's result \cite[Theorem 3.2]{Smith2003}; \cite[Theorem 1.2]{Smith2016} since he did not assume the condition of schematicness on the $\mathbb{N}$-graded ring. As we saw above, in the semi-graded case we need this condition to guarantee the existence of the module of quotients in (at least) the category $\mathsf{LSG}-R$. An immediate task is to investigate if the category $\mathsf{SGR}-R$, or an appropriate subcategory of this, has modules of quotients. Thinking about it, a possible way to solve this problem is due to Gabriel \cite{Gabriel1962}, which consists in to prove that $\mathsf{SGR}-R$ has enough injectives (in fact, this would allow to formulate of notion of {\em cohomology} in the semi-graded setting). Precisely, it is important to note that our methodology here is very different from that considered by Smith, since he used the well-known fact that the category of modules over an $\mathbb{N}$-graded ring is a Grothendieck category, while in the semi-graded context we do not know if $\mathsf{SGR}-R$ is also Grothendieck; we only know that $\mathsf{SGR}-R$ is $\texttt{Ab}5$ \cite[Section 1.5]{ChaconPhD2022}. Of course, if we found a positive answer then automatically we guarantee the existence of enough injectives in $\mathsf{SGR}-R$. This will be our line of research in the near future.

Last but not least, we are also interested in Smith's papers \cite{Smith2001, Smith2002}. In the first, he called a non-commutative space $X$ {\em integral} if there is an indecomposable injective $X$-module $\mathscr{E}_X$ such that its endomorphism ring is a division ring and every $X$-module is a subquotient of a direct sum of copies of $\mathscr{E}_X$. A Noetherian scheme is integral in this sense if and only if it is integral in the usual sense. Smith proved that several classes of non-commutative spaces over $\mathbb{N}$-graded rings are integral. On the other hand, in \cite{Smith2002} he investigated the concepts of closed points, closed subspaces, open subspaces, weakly closed and weakly open subspaces, and effective divisors, on a non-commutative space over an $\mathbb{N}$-graded ring. As expected, the characterization of all these notions in the setting of (schematic) semi-graded rings is a very interesting problem.

\end{document}